\documentclass{article}
\usepackage[utf8]{inputenc}
\usepackage{amsthm}
\usepackage{amsmath}
\usepackage{amsfonts}
\usepackage{color}
\usepackage{latexsym}
\usepackage{geometry}
\usepackage{enumerate}
\usepackage[shortlabels]{enumitem}
\usepackage{csquotes}
\usepackage{graphicx}
\usepackage{comment}
\usepackage{appendix}
\usepackage{hyperref}
\hypersetup{
    colorlinks=true,
    linkcolor=blue,
    filecolor=magenta,      
    urlcolor=cyan,
}
\usepackage{bm}
\usepackage{amssymb}

\tolerance=1
\emergencystretch=\maxdimen
\hyphenpenalty=10000
\hbadness=10000

\def \WW {\mathcal{W}}
\def \NN {\mathcal{N}}

\def \bW {\overline{\mathcal{W}}}
\def \dist {{\rm dist}}
\def \IR {\mathbb{R}}

\DeclareMathOperator{\Rm}{Rm}
\DeclareMathOperator{\Ric}{Ric}
\DeclareMathOperator{\Vol}{Vol}

\makeatletter
\newcommand*{\rom}[1]{\rm {\expandafter\@slowromancap\romannumeral #1@}}
\makeatother

\def\XXint#1#2#3{{\setbox0=\hbox{$#1{#2#3}{\int}$ }
\vcenter{\hbox{$#2#3$ }}\kern-.6\wd0}}

\protected\def\vts{%
  \ifmmode
    \mskip0.5\thinmuskip
  \else
    \ifhmode
      \kern0.08334em
    \fi
  \fi
}

\pagestyle{headings}
\numberwithin{equation}{section}
\newtheorem{Theorem}{Theorem}[section]

\newtheorem{Lemma}[Theorem]{Lemma}
\newtheorem{Corollary}[Theorem]{Corollary}
\theoremstyle{definition}
\newtheorem{Definition}[Theorem]{Definition}

\def \WW {\mathcal{W}}

\def \NN {\mathcal{N}}

\def \bW {\overline{\mathcal{W}}}

\def \Rmin {R_{\operatorname{min}}}

\def \ve {\varepsilon}

\title{A local gap theorem for Ricci shrinkers}
\author{Pak-Yeung Chan, Zilu Ma, Yongjia Zhang}

\begin{document}


\maketitle
\begin{abstract}
  We prove a local gap theorem for Ricci shrinkers, which states that if the local $\mu$-functional at scale $1$ on a large ball centered at the minimum point of the potential function is close enough to $0$, then the shrinker must be the flat gaussian shrinker. In relation to our result, Yokota \cite{Yo09,Yo12} proved the same result assuming the global $\mu$-functional to be close enough to $0$. Our result shows an aspect of how the local geometry of a shrinker controls the global geometry, which is also discussed in \cite{LW19,LW20,LW21}.
\end{abstract}

\section{Introduction}
A shrinking gradient Ricci soliton, or Ricci shrinker for short, is a tuple $(M^n,g,f)$ of a smooth Riemannian manifold and a smooth function called the \emph{potential function}, satisfying the equation

\[
    \Ric + \nabla^2 f = \tfrac{1}{2}g.
\]
In our consideration of a Ricci shrinker, we normalize the function $f$ so that
\begin{align}\label{normalization}
        R + |\nabla f|^2 = f.
\end{align}
As an important subfield in the Ricci flow, the Ricci shrinker has been studied extensively by far, 
leading to many good results which have been facilitating the study of the Ricci flow in general, and its singularity formation in particular.

Surprisingly, Perelman \cite{Per02} shows that a shrinker is a critical point of his $\mathcal W$-functional. Because of the monotonicity of the $\mathcal W$-functional under the Ricci flow and the conjugate heat flow (namely, positive solutions to the conjugate heat equation), a shrinker limit can be obtained by proper scaling on the one hand (c.f. \cite{CZ11,MM15}), and a shrinker and its potential function must be strongly related to the conjugate heat equation and (logarithmic) Sobolev inequalities on the other hand (c.f. \cite{CN09,LW20}); the latter shall be our chief viewpoint in this paper.

Indeed, the potential function $f$ can be regarded as a singular conjugate heat kernel on the canonical form of the shrinker, which, due to Cao-Zhou \cite{CZ10}, always admits  gaussian upper and lower estimates. Precisely,
\begin{align}\label{Cao-Zhou}
        \frac{1}{4} \Big( \dist_g(x,o) - 5n   \Big)_+^2
    \le f(x)
    \le \frac{1}{4} \Big( \dist_g(x,o) + \sqrt{2n}\,\Big)^2,
\end{align}
where $o$ is a fixed \emph{minimum point} of $f.$

Another important geometric quantity of the shrinker is the $f$-volume $\displaystyle \int_M e^{-f}dg$, which also reveals many of the shrinker's properties. For instance, by implementing the $f$-volume, Wylie \cite{W08} proved that the fundamental group of a shrinker must be finite. Since the $f$-volume of a shrinker is always finite \cite{CZ10}, we may define the quantity $\mu_g$ as its logarithm, namely, 
\[
    \int_M (4\pi)^{-\frac{n}{2}}e^{-f}dg = e^{\mu_g}.
\]
The significance of $\mu_g$ is sufficently explored by Carrillo-Ni \cite{CN09} and Li-Wang \cite{LW20}. Indeed, if we let 
\begin{align}\label{new normalization}
\tilde f:=f+\mu_g,
\end{align}
then 
\begin{align}\label{new normalization of f}
\int(4\pi)^{-\frac{n}{2}}e^{-\tilde f}dg=1,
\end{align}
 and $\tilde f$ is exactly the minimizer of Perelman's functional $\mathcal W(g,\cdot,1)$. In fact, much more can be said:
\begin{gather}\label{shrinker entropy}
    2\Delta \tilde f-|\nabla \tilde f|^2+R+\tilde f-n\equiv \mu_g,
    \\\nonumber
    \mu_g=\mathcal W(g,\tilde f,1)=\mu(g,1)=\nu(g).
\end{gather}
The definitions of the $\mathcal W$, $\mu$, and $\nu$ functionals are found in \cite{Per02}. For this reason, the quantity $\mu_g$ is often called the \emph{shrinker entropy}.

By applying his techniques of estimating the reduced volume, Yokota \cite{Yo09,Yo12} proved a gap theorem showing how the  shrinker entropy is involved with the global geometry.

\begin{Theorem}[Yokota's gap theorem \cite{Yo12}]\label{Yokota gap}
There is a positive number $\varepsilon=\varepsilon(n)>0$ depending only on the dimension $n$ with the following property. Let $(M^n,g,f)$ be a Ricci shrinker and let $\mu_g$ be the shrinker entropy. Assume that $\mu_g\ge-\varepsilon$, then $(M^n,g,f)$ must be the flat gaussian shrinker.
\end{Theorem}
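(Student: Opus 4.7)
My plan is a contradiction-by-compactness argument anchored on the rigid characterization of the Gaussian shrinker by $\mu_g=0$. First I would record that $\mu_g\le 0$ for every Ricci shrinker, with equality only for the flat Gaussian: using the identification (\ref{shrinker entropy}), $\mu_g=\mu(g,1)=\nu(g)$, and invoking Perelman's reduced volume on the canonical Ricci flow $g(t)=(-t)\varphi_t^*g$, one shows following Carrillo--Ni that the reduced distance based at the singular spacetime point equals $f$ up to an additive constant, whence the reduced volume is identically $e^{\mu_g}$; Perelman's no-local-collapsing gives $e^{\mu_g}\le 1$, and the equality case $\widetilde V\equiv 1$ forces the canonical flow to be the trivial Euclidean flow, i.e.\ the shrinker is the flat Gaussian.

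To upgrade this qualitative rigidity to a quantitative gap, I would argue by contradiction: suppose there is a sequence of non-Gaussian Ricci shrinkers $(M_i^n,g_i,f_i,o_i)$, pointed at the minimum point $o_i$ of $f_i$, with $\mu_{g_i}\to 0^-$. The bound $\mu_{g_i}\ge -1$ furnishes Perelman's $\kappa$-noncollapsing on unit balls, while the Cao--Zhou estimate (\ref{Cao-Zhou}) gives uniform quadratic control of $f_i$ from $o_i$ and uniform integrability of $(4\pi)^{-n/2}e^{-f_i}dg_i$. With a uniform curvature bound on fixed-size balls about $o_i$, obtained e.g.\ from Shi-type estimates applied to the canonical Ricci flow combined with the shrinker identity $\Ric+\nabla^2 f=\tfrac12 g$ and available scalar-curvature bounds for shrinkers, I would extract a smooth pointed Cheeger--Gromov limit $(M_\infty,g_\infty,f_\infty,o_\infty)$, which is itself a Ricci shrinker. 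Uniform integrability then propagates the entropy through the limit to give $\mu_{g_\infty}=0$, so by the rigidity step $(M_\infty,g_\infty,f_\infty)$ is the flat Gaussian, contradicting non-flatness of $(M_i,g_i)$ for large $i$, since any non-Gaussian shrinker carries a scale-invariant quantity (e.g.\ $|\Rm|(o_i)$ or the first nontrivial Taylor coefficient of the shrinker equation about $o_i$) that is bounded away from zero.

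The main obstacle is obtaining the uniform curvature bound on balls of fixed size around $o_i$: the hypothesis $\mu_{g_i}\to 0$ is purely integral, and a priori nothing prevents curvature concentration near the minimum of the potential. A natural way to sidestep this is to bypass smooth compactness entirely by exploiting the reduced-volume identification from the first step: since $\widetilde V(\tau)\equiv e^{\mu_g}$ along the canonical flow, the problem reduces to a gap theorem for Perelman's reduced volume on a general Ricci flow, namely that $\widetilde V\ge 1-\varepsilon$ forces the flow to be Euclidean. This, in turn, can be proved by a contradiction argument for Ricci flows using only Perelman's monotonicity formula together with the characterization of the equality case, without ever invoking pointwise curvature control on the shrinker side; this is essentially the route taken in \cite{Yo09,Yo12}.
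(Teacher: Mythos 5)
The paper does not prove this statement: it is quoted from Yokota \cite{Yo09,Yo12} and used as a black box in the proof of Theorem \ref{thm: local gap}, so there is no in-paper argument to compare against. Judged on its own, your first step is correct and standard: the identification of the Gaussian density (the asymptotic reduced volume of the canonical form) with $e^{\mu_g}$, the resulting inequality $\mu_g\le 0$, and the rigidity of the equality case are due to Carrillo--Ni and Yokota. The difficulty is entirely in the quantitative step, and there your argument has genuine gaps.

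The compactness scheme fails in two places, only one of which you acknowledge. First, as you note, $\mu_{g_i}\ge -1$ gives no uniform bound on $|\Rm|$ near $o_i$: Shi-type estimates need a curvature bound as input, and scalar curvature control does not bound the full curvature tensor, so the Cheeger--Gromov limit cannot be extracted. Second, and more fundamentally, the contradiction at the end is circular: the assertion that every non-Gaussian shrinker has $|\Rm|(o_i)$ (or some scale-invariant Taylor coefficient at $o_i$) bounded away from zero \emph{is} a gap theorem of exactly the kind being proved --- it is essentially Theorem \ref{rm gap} of this paper, whose proof ultimately rests on Yokota's theorem. A priori a sequence of non-flat shrinkers could have $|\Rm|(o_i)\to 0$, so smooth convergence to the Gaussian yields no contradiction without a further local-to-global rigidity input, which is precisely the content of Theorem \ref{thm: local gap}. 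Your fallback --- reducing to the statement that $\widetilde V\ge 1-\varepsilon$ forces flatness and asserting it follows ``from monotonicity together with the equality case'' --- restates Yokota's theorem rather than proving it: almost-equality in a monotone quantity does not formally imply rigidity. The actual content of \cite{Yo09,Yo12} is an effective $\varepsilon$-regularity estimate (reduced volume close to $1$ at scale $r$ implies $|\Rm|\lesssim r^{-2}$ nearby), applied at all scales $r\to\infty$; that step is the heart of the matter and is absent from your proposal.
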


    The above theorem of Yokota states that the $\mu_g$ of a nonflat shrinker cannot be too close to zero, which is tantamount to saying that, if the global logarithmic Sobolev constant is close enough to zero, then there must be a global control of the curvature, that is, the curvature must be zero. Its local version, namely, the local logarithmic Sobolev constant controlling the local geometry, was first developed by Perelman in his famous pseudolocality theorem \cite{Per02}. However, on a special object such as a Ricci shrinker, the local geometry can sometimes control the global geometry; this is seen, for instance, in \cite{LW19,LW20,LW21}. The theorem which we prove in this paper is of this sort.

We shall consider the local version of Perelman's $\mu$-functional on a smooth manifold $M^n$, which was studied in \cite{W18}. For any open set $\Omega\subset M$ and any smooth metric $g$, define
\begin{align*}
    \mu(\Omega,g,\tau)&:=\inf\left\{\bW(g,u,\tau)\,\bigg|\, \int_M u^2\,dg=1,\ \ u\in C_0^{0,1}(\Omega)\right\},
    \\
    \ \ 
    \\
    \nu(\Omega,g,\tau)&:=\inf\left\{\mu(\Omega,g,s)\, \big|\, s\in(0,\tau]\right\},
\end{align*}
where
\begin{align*}
    \bW(g,u,\tau):=\int_M\left(\tau\big(4|\nabla u^2|+Ru^2\big)-u^2\log u^2\right)\,dg-n-\frac{n}{2}\log4\pi\tau.
\end{align*}
It is well-known that $\mu(\Omega,g,\tau)$ is a local logarithmic Sobolev constant. So long as $\mu(\Omega,g,\tau)>-\infty$, there is a logarithmic Sobolev inequality on $\Omega$, namely,
\begin{align}\label{log Sobolev}
    \int_M u^2\log u^2\, dg +n+\frac{n}{2}\log 4\pi\tau +\mu(\Omega,g,\tau)\le \int_M\tau\big(4|\nabla u|^2+Ru^2\big)\,dg
\end{align}
for any $u\in C_0^{0,1}(\Omega)$ satisfying $\displaystyle\int_Mu^2=1$.

 If the Riemannian manifold $(M,g)$ is complete and noncompact, and if $\Omega$ is precompact, then the functional $\mu(\Omega,g,\tau)$ reflects only the geometric property within $\Omega$; it is plausible that anything could happen near the spatial infinity of the manifold while $\mu(\Omega,g,\tau)$ remains close to zero. However, in the next theorem we shall prove that this observation is not true on a shrinker. If the geometry in a large ball is good enough (in the sense that the local $\mu$ functional is close enough to $0$), then the whole shrinker must be Euclidean.

\begin{Theorem}
\label{thm: local gap}
   For any dimension $n$ there is a positive constant $\delta(n)>0$ with the following property. Let $(M^n,g,f)$ be a complete Ricci shrinker. Let $o\in M$ be a minimum point of $f$. If
    \[
        \mu(B(o,1/\delta), g,1) \ge -\delta,
    \]
then $(M^n,g,f)$ is the flat gaussian shrinker.
\end{Theorem}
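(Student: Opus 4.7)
The plan is to argue by contradiction using Yokota's gap theorem (Theorem~\ref{Yokota gap}): it will suffice to prove that the hypothesis $\mu(B(o,1/\delta),g,1) \ge -\delta$ forces $\mu_g \ge -C(n)\delta$, since taking $\delta$ so small that $C(n)\delta \le \varepsilon(n)$ will then force $(M^n,g,f)$ to be the flat gaussian shrinker. Writing $\rho = 1/\delta$, the central step is the comparison
\[
    \mu(B(o,\rho),g,1) \;\le\; \mu_g + \frac{C(n)(1+|\mu_g|)}{\rho^2},
\]
which I will obtain by evaluating $\bar{\mathcal W}(g,\cdot,1)$ on a cutoff of the global minimizer.

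Recall from \eqref{shrinker entropy} that $v := (4\pi)^{-n/4} e^{-\tilde f/2}$ satisfies $\int_M v^2\,dg = 1$ and $\bar{\mathcal W}(g,v,1) = \mu_g$. Choose a Lipschitz radial cutoff $\eta$ which equals $1$ on $B(o,\rho/2)$, vanishes outside $B(o,3\rho/4)$, and satisfies $|\nabla \eta| \le 8/\rho$; then $u := \eta v/\sqrt{A}$, with $A := \int_M \eta^2 v^2\,dg$, is admissible in the definition of $\mu(B(o,\rho),g,1)$. A direct expansion decomposes $\bar{\mathcal W}(g,u,1) - \mu_g$ into four error contributions: a truncation error $-\int(1-\eta^2)(4|\nabla v|^2 + Rv^2 - v^2\log v^2)\,dg$, cross terms involving $|\nabla \eta|^2 v^2$ and $\eta \nabla \eta \cdot \nabla v$, the renormalization $\log A + \mu_g(1/A - 1)$, and the cutoff entropy $-\int \eta^2 v^2 \log\eta^2\,dg$.

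The analytic core is to bound each of these by $C(n)(1+|\mu_g|)/\rho^2$. Beyond the Cao--Zhou estimate \eqref{Cao-Zhou} and the pointwise identity $4|\nabla v|^2 = v^2|\nabla f|^2 \le fv^2$ (from $R + |\nabla f|^2 = f$ and $R \ge 0$), the key inputs are the moment bounds
\[
    \int_M f\,v^2\,dg = \tfrac{n}{2}, \qquad \int_M f^2\,v^2\,dg \;\le\; \tfrac{n^2}{4} + \tfrac{n}{2},
\]
obtained by integrating \eqref{shrinker entropy} and the traced shrinker equation $\Delta f = n/2 - R$ against $v^2 = (4\pi)^{-n/2}e^{-\tilde f}$. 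Since $d(x,o) > \rho/2$ implies $f(x) \ge (\rho/2 - 5n)^2/4$ once $\rho > 10n$, Chebyshev yields $\int_{\{d > \rho/2\}} v^2\,dg \le C(n)/\rho^2$ and, via a layer-cake argument applied to $\int_{\{f > t\}} f v^2\,dg$, also $\int_{\{d > \rho/2\}} f v^2\,dg \le C(n)/\rho^2$. Combined with the pointwise bound $|v^2 \log v^2| \le v^2\bigl(f + |\mu_g| + \tfrac{n}{2}\log 4\pi\bigr)$, each of the four error terms satisfies the claimed estimate.

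The main obstacle is that the entropy tail carries an additive $|\mu_g|/\rho^2$ while no a priori lower bound on $\mu_g$ is at hand. This is circumvented by the universal inequality $\mu_g = \nu(g) \le 0$ (obtained by testing $\nu$ with a localized Gaussian at any point and letting the scale tend to zero), which lets us substitute $|\mu_g| = -\mu_g$. The main comparison then becomes a linear inequality in $\mu_g$: solving $-\delta \le \mu_g + C(n)(1-\mu_g)\delta^2$ yields $\mu_g \ge -C'(n)\delta$ whenever $\delta$ is small enough that $C(n)\delta^2 \le 1/2$. Shrinking $\delta$ further so that also $C'(n)\delta \le \varepsilon(n)$, Yokota's gap theorem (Theorem~\ref{Yokota gap}) then forces $(M^n,g,f)$ to be the flat gaussian shrinker, completing the contradiction.
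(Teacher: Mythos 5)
Your proposal is correct and follows the same core strategy as the paper's Section~2 proof: truncate the global minimizer $(4\pi)^{-n/2}e^{-\tilde f}$ by a cutoff supported in $B(o,1/\delta)$, feed it into the local logarithmic Sobolev inequality, conclude that $\mu_g$ is $O(\delta)$-close to $0$, and finish with Theorem \ref{Yokota gap}. The differences lie in how the error terms are closed, and they are genuine. The paper first establishes the rough a priori bound $\mu_g\ge\mu(B(o,1),g,1)-C(n)$ (Lemmas 2.1--2.2, which rest on the Wei--Wylie comparison and \cite{LLW21}); this controls the factor $e^{-\mu_g}$ in the Gaussian tail estimate \eqref{outter smallness} and yields $V\ge\tfrac12$, and it then disposes of the remaining entropy term $-\tfrac1V\int\phi^2\log\phi^2\,d\nu$ by Jensen's inequality, which cancels exactly against $\log V$. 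You instead bound every tail by Chebyshev using the moment identities $\int f\,v^2\,dg=\tfrac n2$ and $\int f^2v^2\,dg\le\tfrac{n^2}4+\tfrac n2$, which carry no $\mu_g$-dependence, and you absorb the residual $|\mu_g|/\rho^2$ via the known fact $\mu_g=\nu(g)\le0$ together with a linear-inequality trick; this makes the argument independent of the preparatory lemmas (at the cost of only polynomial rather than exponential tail decay, which is all that is needed). Two small points to tidy up: the renormalization term is actually $\log A+(\tfrac1A-1)\bigl(\mu_g+n+\tfrac n2\log4\pi\bigr)$ rather than $\log A+\mu_g(\tfrac1A-1)$, but the extra piece is harmlessly $O(1/\rho^2)$; and the moment identities require justifying the weighted integration by parts on a noncompact manifold, which is standard given the Cao--Zhou growth of $f$ and the volume bound \eqref{MW14} but should be stated.
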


It is worth noting that the local $\mu$-functional is not a priori involved with curvature. Presumably, a manifold could have very good isoperimetic constant (and hence very nice logarithmic Sobolev constant) with arbitrarily large curvature. To make this point clear, we present the following corollary of Theorem \ref{thm: local gap}.

\begin{Corollary}\label{coro}
For any dimension $n$ there is a positive constant $\delta(n)>0$ with the following property. Let $(M^n,g,f)$ be a complete Ricci shrinker. Let $o\in M$ be a minimum point of $f$. If 
\begin{align}
\left(\operatorname{Area}_{g}(\partial \Omega)\right)^{n} \geq(1-\delta) n^n\omega_{n}\left(\operatorname{Vol}_{g}(\Omega)\right)^{n-1}\qquad\text{ for any regular } \Omega\subset B(o,1/\delta),
\end{align}
then $(M^n,g,f)$ is the flat gaussian shrinker.
\end{Corollary}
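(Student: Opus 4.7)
The plan is to reduce Corollary~\ref{coro} to Theorem~\ref{thm: local gap} by extracting from the near-Euclidean isoperimetric hypothesis a lower bound on the local $\mu$-functional. Concretely, I will produce a modulus $\eta = \eta_n(\delta)$, depending only on $n$ and satisfying $\eta_n(\delta)\to 0$ as $\delta\to 0$, such that the hypothesis implies
$$
\mu(B(o,1/\delta),\,g,\,1) \;\ge\; -\eta_n(\delta).
$$
Taking $\delta_0 = \delta_0(n)$ to be the constant from Theorem~\ref{thm: local gap} and choosing $\delta\le\delta_0$ small enough that $\eta_n(\delta)\le\delta_0$, the inclusion $B(o,1/\delta_0)\subset B(o,1/\delta)$ and the resulting monotonicity of $\mu$ in the domain give $\mu(B(o,1/\delta_0),g,1) \ge -\delta_0$, so Theorem~\ref{thm: local gap} forces $(M,g,f)$ to be the flat gaussian shrinker.

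The main analytic ingredient is the classical chain: near-sharp isoperimetric $\Rightarrow$ near-sharp $L^1$-Sobolev $\Rightarrow$ near-sharp $L^2$-Sobolev $\Rightarrow$ near-sharp logarithmic Sobolev. For $u\in C_0^{0,1}(B(o,1/\delta))$, applying the hypothesis to the superlevel sets $\{|u|>t\}$ together with the coarea formula yields
$$
\int_M |\nabla u|\,dg \;\ge\; (1-\delta)^{1/n}\, n\,\omega_n^{1/n}\,\|u\|_{L^{n/(n-1)}(M)}.
$$
Substituting $|u|^{\beta}$ for a suitable exponent $\beta$ and applying H\"older's inequality upgrades this to an $L^2$-Sobolev inequality on $B(o,1/\delta)$ with constant converging to the Aubin--Talenti constant as $\delta\to 0$. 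Beckner's differentiation argument (differentiating the $L^p$-Sobolev inequality in $p$ at $p=2$) then produces a logarithmic Sobolev inequality whose defect from the sharp Euclidean log-Sobolev inequality tends to $0$ with $\delta$.

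Two auxiliary observations finish the reduction. First, the sharp Euclidean log-Sobolev extremizer is a Gaussian with full support in $\IR^n$, while our test functions are compactly supported in $B(o,1/\delta)$; however, at scale $\tau=1$ the Gaussian has variance of order one, so the truncation error incurred in passing to compactly supported functions in the ball of radius $1/\delta$ is exponentially small in $1/\delta^2$, contributing only a vanishing term to $\eta_n(\delta)$. Second, the scalar curvature of a complete Ricci shrinker satisfies $R\ge 0$, so the $Ru^2$ term in $\bW(g,u,1)$ is nonnegative and the log-Sobolev inequality obtained above directly lower-bounds $\bW$, giving the required bound on $\mu$.

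The principal obstacle is the careful tracking of sharp constants through the chain of implications, ensuring that near-sharpness at each stage propagates with an effective, uniform modulus. Each individual step is well-documented in the literature on sharp Sobolev-type inequalities, but the composite bound must be verified to guarantee $\eta_n(\delta)\to 0$. Since Theorem~\ref{thm: local gap} is qualitative, no explicit rate is required, which simplifies the bookkeeping considerably.
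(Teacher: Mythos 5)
Your overall reduction is the same as the paper's first route: derive a lower bound $\mu(B(o,1/\delta),g,1)\ge -\eta_n(\delta)$ with $\eta_n(\delta)\to 0$ from the almost-Euclidean isoperimetric hypothesis, then invoke Theorem \ref{thm: local gap} (using monotonicity of $\mu$ in the domain). The paper obtains this bound by simply citing \cite[Lemma 3.5]{W18} (Lemma \ref{nu estimate}), which gives $\nu(B(o,1/\delta),g,1/\delta^2)\ge n\log(1-\delta)$ and hence the needed $\mu$-bound at scale $1$; it also records an independent second proof via Li--Wang's pseudolocality theorem. So the skeleton of your argument is fine.

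The genuine gap is in your proposed proof of the key analytic implication. The chain ``near-sharp isoperimetric $\Rightarrow$ near-sharp $L^1$-Sobolev $\Rightarrow$ near-sharp $L^2$-Sobolev $\Rightarrow$ near-sharp log-Sobolev'' does not preserve sharp constants, and for Theorem \ref{thm: local gap} you need the defect to actually tend to $0$, not merely to be bounded by some $C(n)$. Two steps fail. First, substituting $|u|^\beta$ into the $L^1$-Sobolev inequality and applying H\"older yields an $L^2$-Sobolev inequality whose constant is strictly worse than the Aubin--Talenti constant (the $L^2$ extremizers are bubbles, not characteristic functions; the sharp $L^p$ constants for $p>1$ are not consequences of the sharp $L^1$ constant by this elementary argument). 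Second, even granting the sharp $L^2$-Sobolev constant, the passage to the log-Sobolev inequality via Jensen plus optimization of the scaling produces a defect of the form $\tfrac{n}{2}\log\bigl(\tfrac{\pi n e A_n}{2}\bigr)$, which for the Aubin--Talenti value of $A_n$ is approximately $n\log 2>0$ and does not vanish; the Beckner--Pearson mechanism recovers the sharp log-Sobolev constant only in the limit of large ambient dimension, which is unavailable here. ``Differentiating in $p$ at $p=2$'' is how the sharp log-Sobolev arises from the sharp Gagliardo--Nirenberg family of Del Pino--Dolbeault, but that family is not implied by the isoperimetric inequality. The net effect of your chain is only $\mu(B(o,1/\delta),g,1)\ge -C(n)$ for a fixed positive dimensional constant, which is useless for Theorem \ref{thm: local gap}. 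The correct derivation (and essentially the content of the cited \cite[Lemma 3.5]{W18}) goes by symmetrization: the almost-Euclidean isoperimetric inequality yields an almost-sharp P\'olya--Szeg\H{o} inequality, so the symmetric decreasing rearrangement $u^*$ on $\IR^n$ satisfies $\int_{\IR^n}|\nabla u^*|^2\le (1-\delta)^{-2/n}\int_M|\nabla u|^2$ while the $L^2$-norm and the entropy $\int u^2\log u^2$ are preserved; applying the sharp Euclidean log-Sobolev inequality to $u^*$ (together with $R\ge 0$) gives the bound with defect $O_n(\delta)$. (Your worry about truncating the Gaussian extremizer is moot: for a lower bound on the infimum $\mu$ one must estimate every compactly supported test function, and the rearranged function is automatically supported in a ball, so no truncation enters.) Either carry out the rearrangement argument or cite \cite[Lemma 3.5]{W18} directly, as the paper does.
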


As an application of the above theorem and corollary, we prove another local gap theorem for Ricci shrinkers. To state our result, we review the notion of curvature scale.

\begin{Definition}\label{def_radius}
    Let $(M^n,g)$ be a Riemannian manifold, the curvature scale at $x\in M$ is defined as
    \begin{align}\label{def_radius1}
        r_{\Rm}(x):=\sup\big\{r>0\,\big|\, |{\Rm}|\le r^{-2}\ \text{ on }\ B(x,r)\big\}.
    \end{align}
    Similarly, if $(M^n,g_t)_{t\in I}$ is a Ricci flow, then the curvature scale at $(x,t)\in M\times I$ is defined as
    \begin{align}\label{def_radius2}
        r_{\Rm}(x,t):=\sup\big\{r>0\,\big|\, |{\Rm}|\le r^{-2}\ \text{ on }\ B_{g_t}(x,r)\times[t-r^2,t]\big\}.
    \end{align}
\end{Definition}

\begin{Theorem}\label{rm gap}
    For any dimension $n$ there is a positive number $\varepsilon(n)>0$ with the following property. Let $(M^n,g,f)$ be a complete shrinker. Let $o\in M$ be a minimum point of $f$. If $r_{\Rm}(o)\ge \varepsilon^{-1}$, then $(M^n,g,f)$ is the flat gaussian shrinker.
\end{Theorem}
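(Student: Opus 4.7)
The plan is to argue by contradiction and reduce to Theorem \ref{thm: local gap}. Suppose the conclusion fails, so there exists a sequence of complete Ricci shrinkers $(M_i^n, g_i, f_i)$ with minimum points $o_i$ of $f_i$ satisfying $r_{\Rm}(o_i) \ge i$, none of which is flat Gaussian. By definition of the curvature scale, $|\Rm_{g_i}| \le i^{-2}$ on $B_{g_i}(o_i, i)$. It suffices to show that $\mu(B_{g_i}(o_i, 1/\delta), g_i, 1) \to 0$ as $i \to \infty$, since then Theorem \ref{thm: local gap} would force $(M_i, g_i, f_i)$ to be flat Gaussian for large $i$, a contradiction.

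The crucial first step is non-collapsing at $o_i$, which I plan to extract from the shrinker structure rather than from $\mu_{g_i}$ (which a priori could be very negative). The shrinker equation gives $\nabla^2 f_i = \tfrac12 g_i - \Ric_{g_i}$, so on $B_{g_i}(o_i, i)$ one has $\nabla^2 f_i \ge (\tfrac12 - (n-1)i^{-2}) g_i$, which makes $f_i$ uniformly strictly convex there for large $i$. A geodesic loop through $o_i$ of length less than $i$ would lie entirely in $B_{g_i}(o_i, i/2)$, and at an interior maximum of $f_i$ restricted to the loop the second derivative along the geodesic would be simultaneously non-positive and equal to $\nabla^2 f_i(\gamma',\gamma') > 0$, a contradiction (the degenerate case that $f_i$ is constant along the loop is excluded by uniqueness of the minimum of a strictly convex function). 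Combined with Rauch's conjugate radius bound $\ge \pi i$, this yields $\inj_{g_i}(o_i) \ge i/2$, and Günther's inequality then gives $\Vol_{g_i}(B(o_i,1)) \to \omega_n$.

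Armed with uniform curvature bounds and non-collapsing on $B_{g_i}(o_i, i)$, pointed Cheeger--Gromov compactness produces, along a subsequence, a smooth limit $(M_\infty, g_\infty, o_\infty)$ which is a complete flat Riemannian manifold. The Cao--Zhou estimate \eqref{Cao-Zhou} provides locally uniform $C^0$ bounds on $f_i$, hence $f_i \to f_\infty$ smoothly with $\nabla^2 f_\infty = \tfrac12 g_\infty$; the existence of a globally strictly convex function on a complete flat manifold forces $M_\infty = \mathbb{R}^n$ with $f_\infty(x) = \tfrac14|x|^2$ (after centering). Standard continuity of the local $\mu$-functional under smooth convergence on fixed compact domains then yields $\mu(B_{g_i}(o_i, 1/\delta), g_i, 1) \to \mu(B(0, 1/\delta), g_{\mathrm{eucl}}, 1) \ge \mu(\mathbb{R}^n, g_{\mathrm{eucl}}, 1) = 0$, the last equality being the sharp Euclidean log-Sobolev inequality, and the first inequality following from the trivial monotonicity of the infimum in the admissible test function class. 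This closes the contradiction.

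The main obstacle I anticipate is the non-collapsing step: Perelman's standard $\kappa$-noncollapsing gives a constant depending on $\mu_{g_i}$, which a priori cannot be controlled, so one must exploit the strict convexity of $f_i$ near its minimum to produce a dimension-only lower bound on $\inj_{g_i}(o_i)$. Once that is in place, the rest is a routine compactness-and-continuity argument; one could equally well phrase the final step via isoperimetric profile convergence and invoke Corollary \ref{coro} instead of Theorem \ref{thm: local gap}.
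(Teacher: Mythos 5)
Your proposal rests on the same key mechanism as the paper's proof: the shrinker equation plus the curvature bound makes $f$ uniformly strictly convex on $B(o,\varepsilon^{-1})$, a short geodesic loop through the minimum point would violate convexity, and together with the Rauch conjugate-radius bound this yields $\operatorname{inj}_g(o)\gtrsim \varepsilon^{-1}$ --- this is exactly the content of the paper's Lemma \ref{inj estimate}, proved there via Klingenberg's lemma. Where you diverge is the endgame. The paper stays quantitative: it applies Hamilton's geodesic-coordinate estimate (Theorem \ref{geod est}), which needs only the $|{\Rm}|$ bound and the injectivity radius bound, to get $|g_E-g|_g\le C c'^2$ on $B(o,c'/\varepsilon)$, reads off the almost-Euclidean isoperimetric inequality, and invokes Corollary \ref{coro}. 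You instead run a compactness-and-contradiction argument: Cheeger--Gromov limit, identification of the limit as the Gaussian soliton, and continuity of the local $\mu$-functional, feeding into Theorem \ref{thm: local gap}. The paper's route buys explicit dependence of $\varepsilon$ on $n$ and avoids any regularity beyond $C^0$ control of the metric; your route is softer but, as written, needs more regularity than you actually have.

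Two points need repair. First, pointed $C^\infty$ Cheeger--Gromov convergence requires bounds on all covariant derivatives of curvature, which do not follow from $|{\Rm}|\le i^{-2}$ alone; you would either have to derive them from Shi's estimates applied to the canonical form of the shrinker, or downgrade to $C^{1,\alpha}$ convergence --- in which case the convergence of $\mu(\cdot,\cdot,1)$ must be argued by hand, since $\overline{\mathcal W}$ contains the scalar curvature term (here the term is harmless only because $0\le R_{g_i}\le c_n i^{-2}$, which you should say explicitly; note also that the direction you need, $\liminf_i\mu(B(o_i,1/\delta),g_i,1)\ge 0$, is the one requiring transplantation of near-minimizers, not the easy direction). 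The cleanest fix is to replace the compactness step by Hamilton's Theorem \ref{geod est}, which is precisely what the paper does and what your closing remark about the isoperimetric alternative already gestures at. Second, your exclusion of short geodesic loops via an ``interior maximum of $f_i$ restricted to the loop'' is stated backwards: a strictly convex function on an interval attains its maximum at the endpoints, so there need be no interior maximum (and the loop may have a corner at $o_i$, so $f_i\circ\gamma_i$ need not be smooth there). The correct contradiction is the one you hint at parenthetically: strict convexity of $f_i\circ\gamma_i$ with equal endpoint values forces $f_i\circ\gamma_i(t)<f_i(o_i)$ for interior $t$, contradicting the minimality of $f_i$ at $o_i$. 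With these two adjustments the argument goes through.
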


To conclude the introduction, we point out that Theorem \ref{thm: local gap}, Corollary \ref{coro}, and Theorem \ref{rm gap} are in the spirit of \cite[Corollary 7]{LW20}. Corollary \ref{coro} can even be directly derived from the techniques in \cite{LW20} (see Section 3 below). However, Theorem \ref{thm: local gap} does not follow from \cite[Corollary 7]{LW20} as a straightforward corollary. While the latter highly depends on \cite{LLW21}, our approach is totally different. Other gap theorems for shrinkers under global conditions are also seen in \cite{CCL22, MW11, Zh18, Zh20}.

\section{The proof of Theorem \ref{thm: local gap}}
In this section, we present the proof of Theorem \ref{thm: local gap}. We first show a coarse uniform estimate for the local and global entropies using the volume estimates in \cite{LLW21}. The shrinker entropy estimate is then refined by localizing its minimizer $\tilde f$. The refined estimate implies that $\mu_g$ is close to $0$ provided the local $\mu$ entropy is sufficiently small. 
The argument is similar to the proof of the local monotonicity formula for the Ricci flow, see \cite{W20, TZ21, CMZ21}.
Theorem \ref{thm: local gap} then follows from Theorem \ref{Yokota gap} (see also \cite{Yo09, Yo12}). 

\begin{Lemma}
 Let $(M^n,g,f)$ be a Ricci shrinker normalized as in \eqref{normalization}. Let $o$ be a minimum point of $f$. Then we have
\begin{align*}
    \Vol_g\left(B(o,1)\right)\ge c(n)\exp\left(\mu\big(B(o,1),g,1\big)\right),
\end{align*}
where $c(n)$ is a dimensional constant. 
\end{Lemma}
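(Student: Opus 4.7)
The plan is to construct an explicit Lipschitz cutoff test function in the definition of $\mu(B(o,1),g,1)$ and bound the resulting $\bW$-value in terms of $\log\Vol_g(B(o,1))$, so that upon exponentiation we get the claimed inequality.

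Concretely, I would take a Lipschitz $\phi$ with $\phi\equiv 1$ on $B(o,1/2)$, $\spt\phi\subset B(o,1)$, $0\le\phi\le 1$, and $|\nabla\phi|\le 4$, and set $V:=\int\phi^2\,dg$ and $u:=\phi/\sqrt{V}$. Then $u\in C_0^{0,1}(B(o,1))$ with $\int u^2\,dg=1$, so $\mu(B(o,1),g,1)\le\bW(g,u,1)$, and a direct substitution gives
\[
\bW(g,u,1)=\frac{1}{V}\int_M\bigl(4|\nabla\phi|^2+R\phi^2-\phi^2\log\phi^2\bigr)\,dg+\log V-n-\tfrac{n}{2}\log 4\pi.
\]

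Next I would bound each of the three integrands by a dimensional multiple of $\Vol_g(B(o,1))$. For the gradient term, $|\nabla\phi|\le 4$ yields $\int|\nabla\phi|^2\,dg\le 16\Vol_g(B(o,1))$. For the scalar curvature term, the Cao-Zhou estimate \eqref{Cao-Zhou} together with $R\le f$ gives $R\le \tfrac14(1+\sqrt{2n})^2=:C_1(n)$ on $B(o,1)$, hence $\int R\phi^2\,dg\le C_1(n)V$. For the entropy term, the elementary bound $-t\log t\le e^{-1}$ on $[0,1]$ gives $-\int\phi^2\log\phi^2\,dg\le e^{-1}\Vol_g(B(o,1))$. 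Using also $\log V\le\log\Vol_g(B(o,1))$, I arrive at
\[
\bW(g,u,1)\le \frac{C_2(n)\,\Vol_g(B(o,1))}{V}+\log\Vol_g(B(o,1))+C_3(n).
\]

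The remaining step, and the main obstacle, is to bound the ratio $\Vol_g(B(o,1))/V$ by a dimensional constant. Since $V\ge\Vol_g(B(o,1/2))$, this reduces to a volume doubling estimate $\Vol_g(B(o,1))\le C(n)\Vol_g(B(o,1/2))$. On a shrinker, $\Ric$ is not bounded below a priori, so ordinary Bishop-Gromov does not apply; however, the shrinker equation gives $\Ric+\nabla^2 f=\tfrac12 g>0$, and by \eqref{Cao-Zhou} the density $e^{-f}$ is comparable to a dimensional constant on $B(o,1)$, so Bakry-Émery Bishop-Gromov for the $f$-volume transfers to ordinary volume on $B(o,1)$. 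This doubling is exactly the kind of estimate established in \cite{LLW21} and invoked by the authors. Plugging it in yields $\mu(B(o,1),g,1)\le\log\Vol_g(B(o,1))+C(n)$, which is equivalent to the stated inequality after exponentiation.
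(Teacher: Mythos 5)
Your proposal is correct and follows essentially the same route as the paper: the same cutoff test function $\varphi=\eta/\sqrt{\smash[b]{\int\eta^2}}$, the same term-by-term bounds on the gradient, scalar curvature (via $R\le f$ and \eqref{Cao-Zhou}), and entropy integrals, and the same key doubling step $\Vol_g(B(o,1))\le C(n)\Vol_g(B(o,1/2))$. The paper obtains that doubling from the Bakry--\'Emery comparison of Wei--Wylie \cite{WW09} combined with $0\le f\le n$ and $|\nabla f|\le\sqrt{n}$ on $B(o,1)$, which is exactly the mechanism you identify.
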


\begin{proof}
The proof is a slight modification of \cite[Lemma 2.4]{LLW21}, we include it here for the convenience of the reader.

Applying \cite[Theorem 1.2]{WW09} to the concentric balls $B(o,1)$ and $B(o,1/2)$, we have 
\begin{align}\label{WWcomparison}
    \frac{\int_{B(o,1)}e^{-f}dg}{\int_{B(o,1/2)}e^{-f}dg}\le C(n)\exp\left(\sup_{B(o,1)}|\nabla f|\right),
\end{align}
where $C(n)$ is a dimensional constant. By \eqref{normalization}, \eqref{Cao-Zhou}, and the fact that $R\ge 0$ (c.f. \cite{Che09}), we also have that
\begin{align}\label{boundednessoff}
    0\le f(x)\le n,\qquad |\nabla f|\le \sqrt{n},\qquad \text{ for all }\qquad x\in B(o,1).
\end{align}
It follows from \eqref{WWcomparison} that
\begin{align}
    \Vol_g\left(B(o,1/2)\right)\ge c(n)\Vol_g\left(B(o,1)\right).
\end{align}

Now, we consider a smooth cutoff function $\eta$, which is compactly supported in $B(o,1)$ with
\begin{align*}
    0\le \eta\le 1, \qquad \eta\big|_{B(o,1/2)}\equiv 1,\qquad |\nabla \eta|\le 4.
\end{align*}
Letting
$$L:=\int_M \eta^2\,dg,\qquad \varphi=L^{-\frac{1}{2}}\eta,$$
then we have 
\begin{gather}\label{volumecomparison}
   \varphi\in C^{0,1}_0(B(o,1)),\qquad  \int_M\varphi^2=1,\\\nonumber \Vol_g\left(B(o,1)\right)\ge L\ge \Vol_g\left(B(o,1/2)\right)\ge c(n)\Vol_g\left(B(o,1)\right).
\end{gather}
Applying the logarithmic Sobolev inequality \eqref{log Sobolev} to $\varphi$ with $\tau=1$ and $\Omega=B(o,1)$, we have
\begin{align*}
    n+\frac{n}{2}\log4\pi+\mu\big(B(o,1),g,1\big)&\le \left(\frac{64}{L}\Vol_g\left(B(o,1)\right)+\sup_{B(o,1)}R\right)-\int_M\varphi^2\log\varphi^2\, dg
    \\
    &\le \left(\frac{64\Vol_g\left(B(o,1)\right)}{c(n)\Vol_g\left(B(o,1)\right)}+n\right)+\log L-\frac{1}{L}\int_{M}\eta^2\log\eta^2\,dg
    \\
    &\le C(n)+\log L+\frac{1}{eL}\Vol_g\left(B(o,1)\right)
    \\
    &\le \log L+C(n),
\end{align*}
where we have applied \eqref{boundednessoff}, \eqref{volumecomparison}, and the fact that $R\le f$; this finishes the proof of the lemma.

\end{proof}

\begin{Lemma}
\label{lem: rough local mu bound}
 Let $(M^n,g,f)$ be a Ricci shrinker normalized as in \eqref{normalization}. Let $o$ be a minimum point of $f$. Then we have
\[
    \mu_g \ge \mu(B(o,1),g,1) - C(n),
\]
where $C(n)$ is a dimensional constant.
\end{Lemma}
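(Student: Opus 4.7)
The plan is to compare $\mu_g$ directly to the local volume $\Vol_g(B(o,1))$ and then invoke the previous lemma. The key identity is
\[
    e^{\mu_g}=\int_M (4\pi)^{-n/2} e^{-f}\,dg,
\]
so restricting the integral to $B(o,1)$ yields
\[
    e^{\mu_g}\ge \int_{B(o,1)} (4\pi)^{-n/2} e^{-f}\,dg.
\]
From the Cao--Zhou estimate \eqref{Cao-Zhou} together with the normalization \eqref{normalization}, one has $0\le f\le n$ on $B(o,1)$ (this was already used in the previous lemma, cf.\ \eqref{boundednessoff}), hence
\[
    e^{\mu_g}\ge (4\pi)^{-n/2} e^{-n}\,\Vol_g(B(o,1)).
\]

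The second step is to feed in the preceding lemma, which gives
\[
    \Vol_g(B(o,1))\ge c(n)\exp\bigl(\mu(B(o,1),g,1)\bigr).
\]
Combining the two inequalities and taking logarithms produces
\[
    \mu_g\ge \mu(B(o,1),g,1)+\log c(n)-n-\tfrac{n}{2}\log(4\pi),
\]
which is exactly the desired estimate with $C(n):=n+\tfrac{n}{2}\log(4\pi)-\log c(n)$.

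There is no real obstacle here: the argument is a direct two-line computation once the previous lemma is in hand, and the only ingredient besides that lemma is the elementary pointwise bound $f\le n$ on $B(o,1)$ coming from \eqref{normalization} and \eqref{Cao-Zhou}. The content of this ``coarse'' comparison between $\mu_g$ and the local entropy is rather weak; the serious work (localizing the global minimizer $\tilde f$ to get a sharper comparison) will come in the subsequent refinement mentioned in the section preamble.
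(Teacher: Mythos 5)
Your proof is correct and follows essentially the same route as the paper: both arguments sandwich $\Vol_g(B(o,1))$ between $c(n)e^{\mu(B(o,1),g,1)}$ (from the preceding lemma) and $C(n)e^{\mu_g}$. The only difference is that the paper obtains the upper bound $\Vol_g(B(o,1))\le C(n)e^{\mu_g}$ by citing \cite[Lemma 2.5]{LLW21}, whereas you derive it in two lines from the definition of $\mu_g$ and the bound $f\le n$ on $B(o,1)$, which is a perfectly valid (and self-contained) substitute.
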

\begin{proof}
 \cite[Lemma 2.5]{LLW21} shows that  
    \[
        \Vol_g\left(B(o,1)\right) \le C(n) e^{\mu_g}.
    \]
In combination with the above lemma, we complete the proof.
\end{proof}

\begin{proof}[{Proof of Theorem \ref{thm: local gap}}]

Let $\delta\in (0,1)$ be a constant to be determined. Assume $(M^n,g,f)$ is a Ricci shrinker normalized as in \eqref{normalization}, satisfying 
\begin{align*}
    \mu\big(B(o,1/\delta),g,1\big)\ge-\delta,
\end{align*}
where $o$ is the minimum point of $f$.

Next, we shall use the function $(4\pi)^{-\frac{n}{2}}e^{-\tilde f}$ to create a test function that can be properly applied to the logarithmic Sobolev inequality determined by $\mu\big(B(o,1/\delta),g,1\big)$, where $\tilde f$ is defined in \eqref{new normalization}, normalized in the way that $$\int_M (4\pi)^{-\frac{n}{2}}e^{-\tilde f}\,dg=1.$$
Two things are to be attended to in the construction of the test function. One is that the function should have unit $L^2$-norm, the other is that the function should be compactly supported in $B(o,1/\delta)$. Let us deal with these two points one by one.

By a volume estimate proved by Munteanu-Wang \cite[Theorem 1.4]{MW14},
\begin{align}\label{MW14}
       \Vol_g\left(B(o,r)\right) \le C_n r^n\qquad\text{ for all }\qquad r>0.
\end{align}
It follows that, for any $r\ge 40n,$
\begin{align}\label{outter smallness}
        \int_{M\setminus B(o,r)}(4\pi)^{-\frac{n}{2}}e^{-\tilde f}\,dg
    &\le C_n \int_{M\setminus B(o,r)}
\exp\left(-\tfrac{1}{4}\big(\dist(x,o)-5n\big)_+^2 - \mu_g\right)\, dg
    \\\nonumber
    &\le C_n e^{-\mu_g - \frac{1}{5}r^2}
    \le C_n e^{- \frac{1}{5}r^2},
\end{align}
where in the last inequality above we applied Lemma \ref{lem: rough local mu bound} and the fact that $\mu\big(B(o,1),g,1\big)\ge \mu\big(B(o,1/\delta),g,1\big)\ge -\delta\ge -1$.

Let $\eta$ be a standard cutoff function on $\IR$ such that
\[
    \eta|_{(-\infty,1/2]}=1,\quad
    \eta|_{[1,\infty)}=0,\quad
    - 3\sqrt{\eta} \le \eta'\le 0.
\]
Let
\[
    \phi(x) = \eta\left(\tfrac{\dist(x,o)}{\delta^{-1}}\right).
\]
Then we obviously have that
\begin{align}\label{cutoff}
\phi\in C_{0}^1\big(B(o,1/\delta)\big)\qquad \text{ and }\qquad |\nabla \phi|\le 3\delta.
\end{align}
By \eqref{outter smallness}, if we take $\delta\le \bar\delta(n)$, then we have
\begin{align}\label{lower bound of V}
     V
    &:= \int_M(4\pi)^{-\frac{n}{2}}
    \phi^2 e^{-\tilde f}\, dg 
    \ge \int_{ B\big(o,\frac{1}{2}\delta^{-1}\big)}(4\pi)^{-\frac{n}{2}} e^{-\tilde f}\, dg
    \\\nonumber
    &=1-\int_{ M\setminus B\big(o,\frac{1}{2}\delta^{-1}\big)}(4\pi)^{-\frac{n}{2}} e^{-\tilde f}\, dg
    \ge 1 - C_n e^{-\frac{1}{20}\delta^{-2}}
    \ge \tfrac{1}{2}.
\end{align}
We shall use 
\[
     u := \phi\sqrt{(4\pi)^{-\frac{n}{2}}
     e^{-\tilde f}/V}
\]
as the test function. Since $\mu(B(o,1/\delta),1)\ge -\delta$ and $\displaystyle\int_M u^2\,dg=1$, applying \eqref{log Sobolev} to $u$ with $\tau=1$ and $\mu(\Omega,g,\tau)=-\delta$, we have

\begin{align}\label{last but two}
     -\delta 
    &\le \int \left(4|\nabla u|^2
        + Ru^2
    \right)\, dg-\int_M u^2\log u^2\,dg
    - \tfrac{n}{2}\ln (4\pi) - n\\\nonumber
    &= 
    \int \left(4\left|\nabla \log \phi-\tfrac{1}{2}\nabla\tilde f\right|^2u^2
        + Ru^2\right)\,dg-\int_M\left(\log \phi^2-\tfrac{n}{2}\log 4\pi -\tilde f-\log V\right)u^2\,dg - \tfrac{n}{2}\ln (4\pi) - n
        \\\nonumber
&=\frac{1}{V}\int_M\left(4\left|\nabla\phi\right|^2-4\phi\left\langle\nabla\phi,\nabla\tilde f\right\rangle\right)(4\pi)^{-\frac{n}{2}}e^{-\tilde f}\,dg
+\int_M\left(\left|\nabla\tilde f\right|^2+R+\tilde f-n\right)u^2\,dg+\log V
\\\nonumber
&\qquad-\frac{1}{V}\int_M\left(\phi^2\log\phi^2\right) (4\pi)^{-\frac{n}{2}}e^{-\tilde f}\,dg       
   \\\nonumber
   &=\frac{1}{V}\int_M\left(4\left|\nabla\phi\right|^2+2\phi^2\Delta\tilde f-2\phi^2\left|\nabla\tilde f\right|^2\right)(4\pi)^{-\frac{n}{2}}e^{-\tilde f}\,dg
   +\int_M\left(\left|\nabla\tilde f\right|^2+R+\tilde f-n\right)u^2\,dg+\log V
\\\nonumber
&\qquad-\frac{1}{V}\int_M\left(\phi^2\log\phi^2\right) (4\pi)^{-\frac{n}{2}}e^{-\tilde f}\,dg       
   \\\nonumber
&=\int_M\left(2\Delta \tilde f-\left|\nabla\tilde f\right|^2+R+\tilde f-n\right)u^2\,dg+\log V
\\\nonumber
&\qquad +\frac{4}{V}\int_M|\nabla \phi|^2(4\pi)^{-\frac{n}{2}}e^{-\tilde f}\,dg
-\frac{1}{V}\int_M\left(\phi^2\log\phi^2\right) (4\pi)^{-\frac{n}{2}}e^{-\tilde f}\,dg 
\\\nonumber
&=\mu_g+\log V+\frac{4}{V}\int_M|\nabla \phi|^2(4\pi)^{-\frac{n}{2}}e^{-\tilde f}\,dg
-\frac{1}{V}\int_M\left(\phi^2\log\phi^2\right) (4\pi)^{-\frac{n}{2}}e^{-\tilde f}\,dg, 
\end{align}
where $\mu_g$ is the constant defined in \eqref{shrinker entropy}. For the last two terms in the above inequality, we first apply \eqref{cutoff} and \eqref{lower bound of V}
to obtain
\begin{align}\label{last but one}
\frac{4}{V}\int_M|\nabla \phi|^2(4\pi)^{-\frac{n}{2}}e^{-\tilde f}\,dg\le 72\delta^2\int (4\pi)^{-\frac{n}{2}}e^{-\tilde f}\,dg=72\delta^2.
\end{align}
Secondly, since, by \eqref{new normalization of f}, $d\nu:=(4\pi)^{-\frac{n}{2}}e^{-\tilde f}\,dg$ is a probability measure, then, by Jensen's inequality, we have
\begin{align}\label{last}
\frac{1}{V}\int_M\left(\phi^2\log\phi^2\right) (4\pi)^{-\frac{n}{2}}e^{-\tilde f}\,dg&=\frac{1}{V}\int_M\phi^2\log\phi^2 d\nu\ge \frac{1}{V}\left(\int_M\phi^2\,d\nu\right)\log\left(\int_M\phi^2\,d\nu\right)
\\\nonumber
&=\frac{1}{V}\cdot V\log V
\\\nonumber
&=\log V.
\end{align}
Combining \eqref{last but two}, \eqref{last but one}, and \eqref{last}, we have
\begin{align}
\mu_g\ge -\delta-72\delta^2.
\end{align}
Finally, taking $\delta\le\overline\delta(n)$, the conclusion of our theorem follows from Theorem \ref{Yokota gap}.
\end{proof}

\section{A pseudolocality point of view}
The proof of Theorem \ref{thm: local gap} in the previous  section, though very concise, does not reflect much of the Ricci flow mechanism developed by Hamilton, Perelman, and others.  Indeed, a Ricci shrinker $(M^n,g,f)$ generates an ancient Ricci flow  in the following way. Let
\begin{align}\label{canonical form}
\tau_t:=1-t,\qquad \frac{d}{dt}\phi_t=\frac{1}{\tau_t}\nabla_gf\circ\phi_t,\qquad \phi_0=\operatorname{id},\qquad g_t=\tau_t\phi_t^*g.
\end{align}
Then $(M,g_t)_{t\in(-\infty,1)}$ satisfies the Ricci flow equation, and is called the \emph{canonical form} of the Ricci shrinker. If we let $\tilde f_t=f\circ\phi_t+\mu_g$, then
\begin{align}\label{standard CHK}
\tilde u_t=(4\pi\tau_t)^{-\frac{n}{2}}e^{-\tilde f_t}
\end{align}
is a solution to the conjugate heat equation. In the current and following section, we shall view a Ricci shrinker dynamically as a Ricci flow by considering its canonical form, and show how the proofs of Theorem \ref{thm: local gap} and Corollary \ref{coro} is related to the classical theory of Ricci flow. First of all, we recall the pseudolocality theorem of Li-Wang, which is an adaptation of Perelman's pseudolocality theorem (c.f. \cite{W20,LW20}) to the canonical form of a Ricci shrinker.

\begin{Theorem}[Pseudolocality {\cite[Theorem 24]{LW20}}]\label{pseudolocality}
There exist positive numbers $\varepsilon_0(n)>0$ and $\delta_0(n)>0$ with the following properties. Let $(M,g_t)_{t\in(-\infty,1)}$ be the canonical form  of a Ricci shrinker. Suppose $t_0\in(-\infty,1)$ and $B_{g_{t_0}}(x_0,r)\subset M$ is a geodesic ball satisfying
$$\nu\big(B_{g_0}(x_0,r),g_0,r^2\big)>-\delta_0.$$
Then for each $t\in(t_0,\min\{t_0+\varepsilon_0r^2,1\})$ and $x\in B_{g_t}(x_0,\frac{1}{2}r)$, we have
\begin{gather*}
|{\Rm}|(x,t)\le (t-t_0)^{-1},\\
\inf_{\rho\in(0,\sqrt{t-t_0}\, ]} \rho^{-n}\Vol_{g_t}\left(B_{g_t}(y,\rho)\right)\ge \tfrac{1}{2}\omega_n.
\end{gather*}
\end{Theorem}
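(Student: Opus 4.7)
The plan is to argue by contradiction in the spirit of Perelman's original pseudolocality theorem, with the local $\nu$-hypothesis replacing Perelman's almost-Euclidean isoperimetric assumption. Suppose the statement fails: then for each $k \in \mathbb{N}$ there exist a Ricci shrinker with canonical form $(M_k,g_k(t))$, a time $t_0^k \in (-\infty,1)$, and a ball $B_{g_{t_0^k}}(x_0^k,r_k)$ with
\[
  \nu\bigl(B_{g_{t_0^k}}(x_0^k,r_k),\, g_{t_0^k},\, r_k^2\bigr) > -\tfrac{1}{k},
\]
such that at some $t_k \in (t_0^k,\, t_0^k + r_k^2/k)$ and some $y_k \in B_{g_{t_k}}(x_0^k,\tfrac{1}{2}r_k)$, one of the two conclusions (the curvature bound or the non-inflated volume bound) fails. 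By a parabolic rescaling and a time translation I would first reduce to $t_0^k = 0$ and $r_k = 1$.

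Second, I would carry out Perelman's point-picking argument to replace $(y_k,t_k)$ by a nearby pair $(\bar y_k,\bar t_k)$ at which the curvature is essentially maximal on a suitable backwards parabolic cube, then do a second parabolic rescaling normalizing $|\Rm|(\bar y_k,\bar t_k)=1$. The logarithmic Sobolev inequality encoded in the hypothesis $\nu > -1/k$, together with Perelman's $\kappa$-non-collapsing mechanism, supplies a uniform injectivity radius bound at unit scale near $(\bar y_k,\bar t_k)$, and Shi's derivative estimates provide higher regularity on compact parabolic subsets. Hamilton's compactness theorem then yields a smooth pointed limit Ricci flow $(M_\infty,g_\infty(t),\bar y_\infty)$ on some backward interval, with $|\Rm_{g_\infty}|(\bar y_\infty,0)=1$ and bounded geometry on compacta.

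The crux---and the step I expect to be the main technical obstacle---is to transport the initial local entropy lower bound to the limit flow. For this I would invoke the localized entropy monotonicity along the Ricci flow developed in \cite{W20, TZ21, CMZ21}. Starting from a smooth cutoff of Perelman's conjugate heat kernel based at $(\bar y_k,\bar t_k)$ and coupling the cutoff error to the local log-Sobolev inequality at time $0$, one bounds Perelman's $\WW$-functional of the evolved cutoff kernel at intermediate times from below by $-1/k + o(1)$. After the second rescaling and passage to the smooth limit, this gives that on $(M_\infty,g_\infty(0))$ the $\mu$-functional at scale~$1$ on every ball is $\ge 0$, hence $\nu(g_\infty(0)) \ge 0$. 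Perelman's gap theorem for $\nu$ (applied to the limit in place of Theorem~\ref{Yokota gap}) then forces $(M_\infty,g_\infty(0))$ to be isometric to flat $\mathbb{R}^n$, which contradicts $|\Rm|(\bar y_\infty,0)=1$. The volume non-inflated conclusion at a point $x \in B_{g_t}(x_0,\tfrac{1}{2}r)$ is extracted in parallel: either directly from the $\kappa$-non-collapsing consequence of the same entropy lower bound at scale $\sqrt{t-t_0}$, or by running the identical contradiction argument with curvature replaced by volume ratio in the point-picking step.
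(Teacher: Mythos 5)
First, a point of comparison: the paper does not prove Theorem \ref{pseudolocality} at all. It is quoted verbatim as \cite[Theorem 24]{LW20} and used as a black box, so there is no internal proof to measure your proposal against. Your sketch follows the general Perelman--Wang blueprint (contradiction, parabolic rescaling, point-picking, blow-up, contradiction with the initial local entropy bound), which is indeed the architecture of the proof in \cite{W20,LW20}, so the skeleton is right.

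However, as written the proposal has a genuine gap at exactly the step you call the crux, and at the compactness step feeding into it. (i) To run Hamilton compactness at the point-picked points $(\bar y_k,\bar t_k)$ you need a volume-ratio or injectivity-radius lower bound there at the \emph{later} time $\bar t_k>t_0^k$. ``Perelman's $\kappa$-non-collapsing mechanism'' does not deliver this from the hypothesis: the classical no-local-collapsing theorem requires the global functional $\mu(g,\tau)$, whereas here only a local bound at time $t_0$ is assumed. Obtaining non-collapsing at later times from a local entropy bound at the initial time is itself the content of the local no-local-collapsing theorem of \cite{W18}, a separate nontrivial input you would have to quote explicitly. (ii) The claim that local monotonicity gives $\nu(g_\infty(0))\ge 0$ on the blow-up limit, after which a gap theorem finishes, inverts the logic of the actual argument. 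In Perelman's scheme and its localizations, one instead shows that the failure of the curvature bound at $(\bar y_k,\bar t_k)$ forces the integral of Perelman's Harnack quantity $v$ against the conjugate heat kernel based at $(\bar y_k,\bar t_k)$ to be at most $-\beta<0$ for a \emph{definite} dimensional constant $\beta$ at a slightly earlier time; a cutoff then transports this to time $t_0$ and produces a test function compactly supported in $B_{g_{t_0}}(x_0,r)$ with $\overline{\mathcal{W}}$-value at most $-\beta/2$, contradicting $\nu>-\delta_0$ once $\delta_0<\beta/2$. Producing the uniform $\beta$ (Perelman's ``Claim 3'') is the hard analytic content, and a bound of the form $-1/k+o(1)$ on the evolved cutoff kernel does not substitute for it. (iii) Finally, the canonical form of a shrinker need not have bounded curvature, so the conjugate heat kernel estimates, Shi estimates, and cutoff constructions you invoke all require the additional analysis that occupies much of \cite{LW20}. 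A minor point: the second conclusion is a non-collapsing (volume lower) bound, not a ``non-inflated'' bound.
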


We shall show how Theorem \ref{pseudolocality} implies Corollary \ref{coro}. However, it is worth pointing out that Theorem \ref{thm: local gap} itself cannot be immediately reduced to a corollary of Theorem \ref{pseudolocality}. One can nevertheless apply Theorem \ref{pseudolocality} to prove a weaker version of Theorem \ref{thm: local gap}, namely, with the $\mu\big(B(o,1/\delta),g,1\big)>-\delta$ condition replaced by $\nu\big(B(o,1/\delta),g,1\big)>-\delta$. We recall the following technical lemma due to Wang \cite[Lemma 3.5]{W18}, 
which, in combination with Theorem \ref{thm: local gap}, already leads to a proof of Corollary \ref{coro}.

\begin{Lemma} \emph{\cite[Lemma 3.5]{W18}}
\label{nu estimate}
Let $(M^n,g)$ be a Riemannian manifold with nonnegative scalar curvature. Assume that there are $o\in M$ and $\delta\in(0,1)$ such that
\begin{align}\label{isoperimetric}
\left(\operatorname{Area}_{g}(\partial \Omega)\right)^{n} \geq(1-\delta) n^n\omega_{n}\left(\operatorname{Vol}_{g}(\Omega)\right)^{n-1}\qquad\text{ for any regular } \Omega\subset B(o,1/\delta).
\end{align}
Then we have
\begin{align*}
\nu(B(o,1/\delta),g,1/\delta^2)\ge n\log(1-\delta).
\end{align*}
\end{Lemma}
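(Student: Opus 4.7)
The plan is to reduce the desired local logarithmic Sobolev inequality on $B(o,1/\delta)\subset M$ to the sharp Gaussian logarithmic Sobolev inequality on $\IR^n$ (equivalently $\mu(\IR^n,g_{\rm eucl},\tau)=0$) by way of an equimeasurable Schwarz rearrangement, using the near-Euclidean isoperimetric hypothesis to control the loss in the Dirichlet energy. Since $R\ge 0$, the scalar curvature term in $\bW$ is nonnegative and may be discarded, so for any $u\in C^{0,1}_0(B(o,1/\delta))$ with $\int_M u^2\,dg=1$ and any $s\in(0,1/\delta^2]$ it suffices to give a lower bound for
\[
    F(u,s):=4s\int_M|\nabla u|^2\,dg-\int_M u^2\log u^2\,dg-n-\tfrac{n}{2}\log(4\pi s).
\]

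I would then introduce the radially decreasing rearrangement $u^{*}\colon B^{*}\to[0,\infty)$ of $u$ onto the Euclidean ball $B^{*}\subset\IR^n$ of volume $\Vol_g(B(o,1/\delta))$. By equimeasurability, $\int_{B^{*}}(u^{*})^2\,dx=1$ and $\int_{B^{*}}(u^{*})^2\log(u^{*})^2\,dx=\int_M u^2\log u^2\,dg$. The crux of the argument is a P\'olya--Szeg\H{o} type inequality with explicit deficit,
\[
    \int_M|\nabla u|^2\,dg\;\ge\;(1-\delta)^{2/n}\int_{B^{*}}|\nabla u^{*}|^2\,dx,
\]
proved via the Fleming--Rishel coarea formalism: writing $\psi(t):=\Vol_g(\{u>t\})$ and $A(t):=\operatorname{Area}_g(\{u=t\})$, one combines the coarea identity $\int_M|\nabla u|^2\,dg=\int_0^\infty\!\int_{\{u=t\}}|\nabla u|\,d\sigma\,dt$ with the pointwise Cauchy--Schwarz inequality $A(t)^2\le(-\psi'(t))\int_{\{u=t\}}|\nabla u|\,d\sigma$ to obtain $\int_M|\nabla u|^2\,dg\ge\int_0^\infty A(t)^2(-\psi'(t))^{-1}\,dt$; the hypothesis \eqref{isoperimetric} provides $A(t)\ge((1-\delta)n^n\omega_n)^{1/n}\psi(t)^{(n-1)/n}$, while on the Euclidean side the same chain holds with equality since the radial level sets of $u^{*}$ saturate both Cauchy--Schwarz (constancy of $|\nabla u^{*}|$ on spheres) and the Euclidean isoperimetric inequality.

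Absorbing the factor $(1-\delta)^{2/n}$ into the scale by setting $s':=s(1-\delta)^{2/n}$ and substituting, one arrives at
\[
    F(u,s)\;\ge\;F_{\rm eucl}(u^{*},s')+\tfrac{n}{2}\log(s'/s)\;=\;F_{\rm eucl}(u^{*},s')+\log(1-\delta),
\]
and the sharp Gaussian logarithmic Sobolev inequality applied to $u^{*}\in C^{0,1}_0(B^{*})\subset C^{0,1}_0(\IR^n)$ at scale $s'>0$ yields $F_{\rm eucl}(u^{*},s')\ge 0$. Taking the infimum over $u$ and $s\in(0,1/\delta^2]$ gives $\nu(B(o,1/\delta),g,1/\delta^2)\ge\log(1-\delta)\ge n\log(1-\delta)$, the last inequality holding because $\log(1-\delta)\le 0$ and $n\ge 1$. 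I expect the main technical hurdle to be the rigorous verification of the P\'olya--Szeg\H{o} inequality with the correct $(1-\delta)^{2/n}$ deficit, which requires care with the smoothness of level sets (handled for a.e.\ $t$ by Sard's theorem) and with the interplay between coarea, Cauchy--Schwarz, and the isoperimetric input; the remaining ingredients---the scaling identity for $\bW$ and the sharp Euclidean log-Sobolev inequality---are standard.
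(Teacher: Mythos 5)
The paper does not actually prove this lemma---it is quoted verbatim from \cite[Lemma 3.5]{W18}---but your symmetrization argument is precisely the standard proof of results of this type (going back to Perelman's \S10): drop the $\tau R u^2\ge 0$ term, pass to the Schwarz rearrangement, get the P\'olya--Szeg\H{o} inequality with deficit $(1-\delta)^{2/n}$ from the coarea formula, Cauchy--Schwarz on level sets, and the isoperimetric hypothesis, absorb the deficit into the scale, and invoke the sharp Euclidean logarithmic Sobolev inequality; the bookkeeping $\tfrac{n}{2}\log\big((1-\delta)^{2/n}\big)=\log(1-\delta)\ge n\log(1-\delta)$ is correct and even slightly stronger than claimed. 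The only points requiring care are the standard ones you already flag: reduce to smooth nonnegative $u$ (by density in the definition of $\mu$) so that Sard's theorem makes a.e.\ superlevel set a regular domain compactly contained in $B(o,1/\delta)$ to which the hypothesis applies, and treat the critical set $\{\nabla u=0\}$ correctly in the identity $-\psi'(t)\ge\int_{\{u=t\}}|\nabla u|^{-1}\,d\sigma$, whose direction fortunately only helps the inequality.
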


\begin{proof}[A proof of Corollary \ref{coro} via Theorem \ref{pseudolocality}]
Let $(M^n,g,f)$ be the Ricci shrinker in the statement of Corollary \ref{coro} and let $(M,g_t)_{t\in(-\infty,1)}$ be its canonical form. We take $\delta\le \overline{\delta}(n)$, such that
\begin{align*}
\varepsilon_0/\delta^2\ge 1,\qquad n\log(1-\delta)\ge -\delta_0,
\end{align*}
where $\varepsilon_0$ and $\delta_0$ are the constants in Theorem \ref{pseudolocality}. Due to Lemma \ref{nu estimate}, we can apply Theorem \ref{pseudolocality} to $(M,g_t)_{t\in(-\infty,1)}$ at $(o,0)$ with $r=\delta^{-1}$. This leads to 
\begin{align}\label{upper bound due to pseudolocality}
R(o,t)\le \frac{1}{t},\qquad \text{ for all }t\in(0,1).
\end{align}

On the other hand, since $o$ is the minimum point of the potential function $f$, we have that $o$ is a static point under the flow $\phi_t$ which defines the canonical form \eqref{canonical form}. Thus, we have
\begin{align}\label{lower bound due to canonical form}
R(o,t)=\frac{1}{\tau_t} R_g(\phi_t(o))=\frac{1}{1-t}R_g(o),\qquad \text{ for all }t\in(-\infty,1).
\end{align}
Combining \eqref{upper bound due to pseudolocality} and \eqref{lower bound due to canonical form}, we have
\begin{align*}
R_g(o)\le \frac{1-t}{t},\qquad\text{ for all } t\in(0,1).
\end{align*}
Taking $t\to 1$, we have
$$R_g(o)=0.$$
Due to \cite{Che09} and the strong minimumm principle, the Ricci shrinker must be Ricci flat, and the shrinker equation $$\nabla^2f=\tfrac{1}{2}g$$ 
immediately implies that the shrinker is gaussian (c.f. \cite[Theorem 1]{PRS11}).
\end{proof}

\section{A local monotonicity point of view}

In this section, we shall present a proof of Theorem \ref{thm: local gap} by applying Wang's local monotonicity technique \cite{W18} sharpened by the authors \cite{CMZ21}. To avoid too much technical verbosity, to focus on the central idea, and since we have already shown a rigorous proof of Theorem \ref{thm: local gap} in Section 2, we shall simply assume that the shrinker in question has bounded curvature. In this way, all the techniques developed in \cite{CMZ21} can be applied to the canonical form defined as in \eqref{canonical form}. Let us recall some results in \cite{Bam20a} and \cite{CMZ21}.

\subsection{Conjugate heat kernel and Nash entropy}

Bamler \cite{Bam20a} studied the $W_1$-Wasserstein distance between conjugate heat kernels. For two probability measures $\mu$, $\nu$ in a metric space $(X,d)$, the $W_1$-Wasserstein distance is defined as 
\begin{align}\label{W1}
    \dist_{W_1}^X(\mu,\nu)=\sup_f\left(\int_X f\,d\mu-\int_X f\,d\nu\right),
\end{align}
where the infimum is taken over all bounded $1$-Lipschitz functions.

Now we consider a Ricci flow $(M,g_t)_{t\in I}$ with bounded curvature within each compact time interval. Let $\mu_t$ and $\nu_t$ be conjugate heat flows, namely,
$$\mu_t=u(\cdot,t)\,dg_t,\qquad \nu_t=v(\cdot,t)\,dg_t,$$
where $u(x,t)$ and $v(x,t)$ are positive solutions to the conjugate heat equation with unit integral. By applying a simple gradient estimate for the heat equation, Bamler \cite[Lemma 2.7]{Bam20a} shows that
$$t\to \dist_{W_1}^{g_t}(\mu_t,\nu_t)$$
is an increasing function in $t$.

In practice, we shall mostly consider conjugate heat kernels as conjugate heat flows, and denote by
$$d\nu_{x,t\,|\,s}=K(x,t\,|\,\cdot,s)\,dg_s$$
the conjugate heat kernel based at $(x,t)$. The Nash entropy based at $(x,t)$ is defined as
\begin{align*}
    \NN_{x,t}(\tau)=-\int_M\log K(x,t\,|\,\cdot,t-\tau)\,d\nu_{x,t\,|\,t-\tau}-\frac{n}{2}\log4\pi\tau-\frac{n}{2}.
\end{align*}

For any point $(x,t)\in M\times(-\infty,1)$ and any $s<t$, Bamler \cite{Bam20a} shows that there is always an $H_n$-center $(z,s)$ of $(x,t)$, such that $\nu_{x,t\,|\,s}$ is close to $\delta_z$, for instance, in the sense of $W_1$-Wasserstein distance. Precisely, we have (c.f. \cite[Corollary 3.8]{Bam20a})
\begin{align}\label{H_n-concentration}
    \dist_{W_1}^{g_s}\big(\nu_{x,t\,|\,s},\delta_z\big)\le \sqrt{H_n(t-s)},
\end{align}
where $H_n=\frac{(n-1)\pi^2}{2}+4$. It is clear from \cite{Bam20a} and \cite{CMZ21} 
that the Nash entropy is strongly involved with the local geometry at the base point as well as at the $H_n$-center.

\begin{Theorem}[{\cite[Theorem 1.10]{CMZ21}
}]
\label{thm: Nash depends on nu}
Assume that $[-r^2,0]\subseteq I$. Furthermore, assume that $R_{g_{-r^2}}\geq \Rmin$. Then, for any $x_0\in M$, any $H_n$-center $(z,-r^2)$ of $(x_0,0)$, and any $A\ge 8$, we have
\begin{align}\label{eq:nash depend on mu}
     \mu\left(B_{{-r^2}}\left(z,2A\sqrt{H_n}r\right), g_{-r^2},r^2\right)
    \le \NN_{x_0,0}(r^2)
    +  C(n,\Rmin r^2,A),
\end{align}
where
$$C(n,\Rmin r^2,A)=\tfrac{C_n}{A^2}e^{-\frac{A^2}{20}}+8\left(e^{-\frac{A^2}{20}}\cdot (n-2\Rmin r^2)+e^{-\frac{A^2}{40}}\cdot(n-2\Rmin r^2)^{\frac{1}{2}}\right),$$
and $C_n$ is a dimensional constant.
\end{Theorem}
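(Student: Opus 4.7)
The plan is to test the local logarithmic Sobolev inequality defining $\mu$ with a cutoff of the conjugate heat kernel $K(x_0,0\,|\,\cdot,-r^2)$ itself, centered near the $H_n$-center $(z,-r^2)$. After exact cancellation of cross terms (as in Perelman's original entropy computation), the resulting bound on $\mu$ will reduce to Perelman's $\WW$-entropy of this conjugate heat kernel plus cutoff errors, which I then relate to $\NN_{x_0,0}(r^2)$ via Perelman's identity $\WW = \NN + \tau\,\partial_\tau \NN$; the errors will be controlled using Bamler's $H_n$-concentration together with Hein--Naber-type Gaussian decay estimates for $K$.

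Concretely, set $v := K(x_0,0\,|\,\cdot,-r^2)$ and $f := -\log v - \tfrac{n}{2}\log(4\pi r^2)$, so that $v$ is a probability density on $(M,g_{-r^2})$ and $\NN_{x_0,0}(r^2) = \int_M fv\,dg_{-r^2} - n/2$. Choose a smooth cutoff $\eta$ with $\eta\equiv 1$ on $B_{-r^2}(z,A\sqrt{H_n}\,r)$, $\spt\eta\subset B_{-r^2}(z,2A\sqrt{H_n}\,r)$, and $|\nabla\eta|\le C_n/(A\sqrt{H_n}\,r)$, and take the test function $u := \sqrt{\eta^2 v/Z}$ with $Z := \int_M \eta^2 v\, dg_{-r^2}$; then $u\in C_0^{0,1}(B_{-r^2}(z,2A\sqrt{H_n}\,r))$ with $\int u^2\,dg_{-r^2}=1$, and the local $\mu$-functional in question is bounded by $\bW(g_{-r^2},u,r^2)$. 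Substituting into $\bW$, expanding $4|\nabla u|^2$ in terms of $\nabla\eta$ and $\nabla f$, and rewriting $\int \eta^2[r^2(|\nabla f|^2 + R) + f - n]v\,dg_{-r^2}$ by one integration by parts as $\int \eta^2 P v\,dg_{-r^2} + 4r^2\int \eta v\,\nabla\eta\cdot\nabla f\,dg_{-r^2}$, with $P := r^2(2\Delta f - |\nabla f|^2 + R) + f - n$ Perelman's pointwise Harnack quantity, the cross terms involving $\nabla\eta\cdot\nabla f$ cancel exactly, leaving the clean identity
\begin{align*}
    \bW(g_{-r^2}, u, r^2) = \frac{1}{Z}\int_M \eta^2 P v\,dg_{-r^2} + \frac{4r^2}{Z}\int_M v|\nabla\eta|^2\,dg_{-r^2} - \frac{1}{Z}\int_M \eta^2 v \log\eta^2\,dg_{-r^2} + \log Z.
\end{align*}

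For the main term, Perelman's differential Harnack for the conjugate heat kernel gives $P\le 0$ pointwise; writing $\int \eta^2 P v = \WW + \int (1-\eta^2)(-P)v$ with $\WW := \int_M P v\,dg_{-r^2}$, the identity $\WW = \NN + \tau\,\partial_\tau\NN$ combined with monotonicity of $\WW$ under conjugate heat flow yields $\WW(r^2)\le \NN_{x_0,0}(r^2)\le 0$, so that $\WW/Z \le \WW \le \NN_{x_0,0}(r^2)$ since $Z\in(0,1]$. The cutoff-gradient, $\eta^2\log\eta^2$, and $\log Z$ terms are all of order $C_n A^{-2}e^{-A^2/20}$: by the $H_n$-center property together with Hein--Naber-type Gaussian bounds, the $\nu:= v\,dg_{-r^2}$-mass of $M\setminus B_{-r^2}(z,A\sqrt{H_n}\,r)$ is exponentially small in $A^2$, while $|\nabla\eta|$ and $\eta^2\log\eta^2$ are supported only on the corresponding annulus. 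The main obstacle is controlling $Z^{-1}\int(1-\eta^2)(-P)v\,dg_{-r^2}$: here I will integrate by parts to eliminate $\Delta f$, then apply Cauchy--Schwarz to pair the exponentially small tail $\nu$-mass against second moments of $|\nabla f|^2$ and $f$ (supplied by the Hein--Naber Gaussian bounds for $K$), and invoke $R\ge \Rmin$ to bound $-\int(1-\eta^2) R v\,dg_{-r^2} \le (-\Rmin)(1-Z)$; this step produces the $(n-2\Rmin r^2)$-type factors in $C(n,\Rmin r^2,A)$, while the weaker exponent $e^{-A^2/40}$ appears naturally from the Cauchy--Schwarz square roots.
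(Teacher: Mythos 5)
The paper offers no proof of this theorem --- it is quoted verbatim from \cite[Theorem 1.10]{CMZ21} --- so I am judging your proposal against the argument in that reference, which it indeed mirrors in overall strategy: cut off the conjugate heat kernel near the $H_n$-center, plug it into $\bW$, use Perelman's Harnack $P\le 0$ and $\WW\le\NN$, and control the tails by Gaussian concentration and the moment bound $\tau\int|\nabla f|^2\,d\nu\le \tfrac n2-\tau \Rmin$. Your ``clean identity'' for $\bW(g_{-r^2},u,r^2)$ is correct (I checked the cross-term cancellation), and the gradient-of-cutoff, $\eta^2\log\eta^2$, and $\log Z$ terms are handled as you say.

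There is, however, a genuine gap in your treatment of the tail term $Z^{-1}\int(1-\eta^2)(-P)v$. After integrating by parts, the dangerous piece is $-\int(1-\eta^2)f\,d\nu$. Writing $\bar f:=\int f\,d\nu=\NN_{x_0,0}(r^2)+\tfrac n2$, this equals $-\int(1-\eta^2)(f-\bar f)\,d\nu-\bar f(1-Z)$; the centered part is controlled by Cauchy--Schwarz and the Poincar\'e inequality for $\nu$ (variance $\le 2\tau\int|\nabla f|^2 d\nu\le n-2\Rmin r^2$), but the mean part contributes $(-\NN_{x_0,0}(r^2))(1-Z)\ge 0$, which is \emph{not} bounded by $C(n,\Rmin r^2,A)$ when the Nash entropy is very negative. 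Your proposed fix --- Cauchy--Schwarz against the ``second moment of $f$'' --- does not repair this, since $\int f^2\,d\nu$ itself contains $\bar f^{\,2}\sim\NN^2$; nor do Gaussian upper bounds on $K$ help, as they carry a factor $e^{-\NN}$. The correct repair, which your step ``$\WW/Z\le\WW\le\NN$'' throws away too early, is to keep the $1/Z$ on the main term: since $\NN\le 0$ and $Z\le 1$,
\begin{align*}
\frac{\WW}{Z}+\frac{(-\NN_{x_0,0}(r^2))(1-Z)}{Z}\;\le\;\frac{\NN_{x_0,0}(r^2)}{Z}-\frac{\NN_{x_0,0}(r^2)(1-Z)}{Z}\;=\;\NN_{x_0,0}(r^2),
\end{align*}
so the $\NN$-dependent tail contribution is exactly absorbed, leaving only the $e^{-A^2/20}(n-2\Rmin r^2)$ and $e^{-A^2/40}(n-2\Rmin r^2)^{1/2}$ errors of the stated form. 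With this cancellation inserted (and with the standing bounded-curvature hypothesis needed to justify $P\le 0$ and the integrations by parts on a noncompact manifold), your argument goes through and coincides with the cited proof.
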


Next, we recall the following $\varepsilon$-regularity theorem due to Bamler, which states that if on a Ricci flow the Nash entropy based at a point is small enough, then the curvature scale
cannot be too small.

\begin{Theorem}[{\cite[Theorem 10.2]{Bam20a}}]\label{epsilon regularity}
There exists a positive dimensional constant $\varepsilon(n)>0$ with the following property. Assume that $(x,t)\in M\times I$ and $[t-r^2,t]\subset I$. If $\NN_{x,t}(r^2)\ge -\varepsilon$, then $r_{\Rm}(x,t)\ge \varepsilon r$.
\end{Theorem}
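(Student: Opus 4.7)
The plan follows a standard contradiction/rescaling argument of $\varepsilon$-regularity type. Suppose, toward contradiction, that there exist Ricci flows $(M_i^n,g_t^i)_{t\in I_i}$, basepoints $(x_i,t_i)$ and scales $r_i$ with $[t_i-r_i^2,t_i]\subset I_i$, such that $\NN_{x_i,t_i}(r_i^2)\ge -\varepsilon_i$ with $\varepsilon_i\to 0$, yet $r_{\Rm}(x_i,t_i)<\varepsilon_i r_i$. Parabolic scale invariance of the Nash entropy lets us normalize $r_i=1$, $t_i=0$. By definition of curvature scale, the parabolic region $B_{g^i_0}(x_i,\varepsilon_i)\times[-\varepsilon_i^2,0]$ contains a point of curvature exceeding $\varepsilon_i^{-2}$, so a Hamilton--Perelman point-selection argument produces ``bad'' points $(y_i,s_i)$ at which $Q_i:=|{\Rm}|(y_i,s_i)\to\infty$ and $|{\Rm}|\le 2Q_i$ on the parabolic ball $B_{g^i_{s_i}}(y_i,A_iQ_i^{-1/2})\times[s_i-A_iQ_i^{-1},s_i]$ with $A_i\to\infty$.

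Second, and crucially, I must transfer the entropy bound from the coarse basepoint $(x_i,0)$ down to $(y_i,s_i)$ at a vastly finer scale. This is the role of Bamler's concentration theory: using \eqref{H_n-concentration} to control the $W_1$-drift of the conjugate heat kernels, combined with the monotonicity of Nash entropy along conjugate heat flows and the estimates behind Theorem \ref{thm: Nash depends on nu}, one obtains $\NN_{y_i,s_i}(\sigma)\ge-\delta_i$ with $\delta_i\to 0$ for a range of scales $\sigma$ extending through $Q_i^{-1}$. Parabolic rescaling by $Q_i$ and centering at $(y_i,s_i)$ turns this into $\NN^{\tilde g^i}_{y_i,0}(\tau)\ge -\delta_i$ for every $\tau\le\tau_i$ with $\tau_i\to\infty$, on flows $\tilde g^i$ carrying uniformly bounded curvature on arbitrarily large parabolic neighborhoods of the new basepoint and satisfying $|{\Rm}|_{\tilde g^i}(y_i,0)=1$.

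Third, Cheeger--Gromov compactness (legitimized by step one) extracts a subsequential smooth limit ancient Ricci flow $(M_\infty,g^\infty_t)_{t\le 0}$ with $|{\Rm}|_{g_\infty}(y_\infty,0)=1$. The smooth convergence of the conjugate heat kernels passes the entropy bound to the limit, and together with the universal upper bound $\NN\le 0$ forces $\NN^{g_\infty}_{y_\infty,0}(\tau)\equiv 0$ on $(0,\infty)$. The equality case in Perelman's monotonicity then identifies the limit as a gradient shrinking soliton whose shrinker entropy vanishes, and Yokota's gap theorem (Theorem \ref{Yokota gap}) forces it to be the flat Gaussian shrinker, contradicting $|{\Rm}|_{g_\infty}(y_\infty,0)=1$. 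The main obstacle is precisely the second step: without a clean transfer of entropy information across both scale and basepoint, via the $H_n$-center machinery together with Harnack/concentration estimates for the conjugate heat kernel, the limiting flow would have no reason to inherit the vanishing-entropy rigidity on which the entire argument hinges.
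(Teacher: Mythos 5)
This theorem is not proved in the paper at all: it is imported verbatim from Bamler \cite[Theorem 10.2]{Bam20a}, so there is no internal proof to compare against. Your outline does follow the blueprint of Bamler's actual argument (contradiction, parabolic rescaling, Perelman-type point selection, transfer of the Nash entropy bound to the selected points, compactness, rigidity of vanishing entropy), so the strategy is the correct one. As written, however, it has two genuine gaps.

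First, in your second step the error incurred in passing from $\NN_{x_i,0}$ to $\NN_{y_i,s_i}$ does not tend to zero unless the selected points $(y_i,s_i)$ converge \emph{parabolically} to the base points $(x_i,0)$. The oscillation estimate for the Nash entropy (and likewise the error term in Theorem \ref{thm: Nash depends on nu}) carries a contribution of the form $\dist_{W_1}^{g_{s_i}}\big(\delta_{y_i},\nu_{x_i,0\,|\,s_i}\big)/\sqrt{\tau}$; comparing at a fixed scale $\tau\sim 1$ this is of order $\dist(x_i,y_i)+\sqrt{|s_i|}$, which must be forced to vanish. This works only because the initial bad point lies in $B(x_i,\varepsilon_i)\times[-\varepsilon_i^2,0]$ and the total parabolic displacement of the point selection is $O(A_i\varepsilon_i)$, so you must let $A_i\to\infty$ slowly relative to $\varepsilon_i^{-1}$; you then reach the scales $T_iQ_i^{-1}$ by the monotonicity $\NN_{y_i,s_i}(\sigma)\ge\NN_{y_i,s_i}(1/2)\ge-\delta_i$ for $\sigma\le 1/2$. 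Without these two observations the claim that $\NN_{y_i,s_i}(\sigma)\ge-\delta_i$ "for a range of scales extending through $Q_i^{-1}$" is unjustified. Second, Cheeger--Gromov--Hamilton compactness is \emph{not} "legitimized by step one": point selection yields curvature bounds but no injectivity radius (equivalently, volume) lower bound at the rescaled base points. That non-collapsing input comes from the entropy bound $\NN_{y_i,s_i}\ge-\delta_i$ itself, i.e.\ from your step two, via Bamler's lower volume bound or Perelman's $\kappa$-noncollapsing, and it must be stated. Two smaller caveats: passing $\NN\equiv 0$ to the limit requires uniform Gaussian concentration of the conjugate heat kernels so that the entropy integrands are uniformly integrable; and in the final rigidity step, Yokota's quantitative gap theorem is more than is needed---once the limit is a shrinker with $\mu=0$ exactly, the equality case of the logarithmic Sobolev inequality (Carrillo--Ni) already forces flatness---though your route is not incorrect.
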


\subsection{Another proof of Theorem \ref{thm: local gap}}

We would like to briefly summarize the proof of Theorem \ref{thm: local gap} presented in the section. The central idea is to apply Theorem \ref{thm: Nash depends on nu} properly in space-time of the canonical form defined in \eqref{canonical form}. Let us fix an arbitrary point $(x,0)$ in the canonical form. It is well-understood that as $\tau_i\to\infty$, the scaled conjugate heat kernel $\nu_{x,0\,|\,\tau_it}$ converges to $\tilde u_t\,dg_t$ (c.f.\cite{CZ10}), where $\tilde u_t$ is the ``singular conjugate heat kernel'' defined using the potential function in \eqref{standard CHK}. Since $\tilde u_t$ satisfies the gaussian estimates \eqref{Cao-Zhou}, and since the minimum point $o$ of the potential function is static in the canonical form, it is natural to assert that, whenever $t\ll-1$, $\nu_{x,0\,|\,t}$ and $\tilde u_t\,dg_t$ are close in the $W_1$-Wasserstein sense. Hence, for any $r\gg 1$, $(o,-r^2)$ is almost an $H_n$-center of $(x,0)$. 

On the other hand, since the canonical form moves by diffeomorphism, along which $o$ is static, one sees that, for all $r\gg 1$, the local geometry in $B_{g_{-r^2}}(o, \delta^{-1}r)$ is derived from the geometry in $B_{g}(o,\delta^{-1})$. Thus, $\mu\big(B_{g_{-r^2}}(o,\delta^{-1}r),g_{-r^2},r^2\big)\approx\mu\big(B_g(o,1/\delta),g,1\big)$. Combining the two observations above, we may apply Theorem \ref{thm: Nash depends on nu} to conclude $\NN_{x,0}(r^2)\approx 0$ for all $r\gg 1$, the conclusion then follows from Theorem \ref{epsilon regularity}.

Let $(M^n,g,f)$ be a Ricci shrinker with bounded curvature satisfying 
\begin{align}\label{smallness assumption}
    \mu\big(B(o,1/\delta),g,1\big)\ge -\delta,
\end{align}
where $\delta\in(0,1)$ is a positive dimensional constant to be determined and $o$ is the point where $f$ attains its minimum. We shall consider the canonical form of the shrinker $(M,g_t)_{t\in(-\infty,1)}$ as defined in \eqref{canonical form}. We also denote by 
$$d\mu_t=\tilde u_t\,dg_t,$$
where $\tilde u_t$ is defined in \eqref{standard CHK}, the canonical conjugate heat flow generated by the shrinker potential function. To begein with, we present two preparatory lemmas.

\begin{Lemma}[Continuous dependence of $\mu$-functional on the scale]\label{continuity}
Let $\Omega$ be a precompact domain on a smooth Riemannian manifold $(M^n,g)$ with positive scalar curvature. Then $\mu(\Omega,g,\tau)$ depends continuously on $\tau$.
\end{Lemma}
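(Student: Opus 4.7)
The plan is to exploit the fact that the integrand in $\bW(g,u,\tau)$ is affine in $\tau$ for each fixed test function $u$. Writing $A(u) := \int_M (4|\nabla u|^2 + Ru^2)\,dg$ and $B(u) := \int_M u^2\log u^2\,dg$, we have the decomposition
\begin{equation*}
\bW(g,u,\tau) = \tau A(u) - B(u) - n - \tfrac{n}{2}\log(4\pi\tau),
\end{equation*}
so, letting $F(\tau) := \inf\{\tau A(u) - B(u) : u \in C_0^{0,1}(\Omega),\ \int_M u^2\,dg = 1\}$, we obtain
\begin{equation*}
\mu(\Omega,g,\tau) = F(\tau) - n - \tfrac{n}{2}\log(4\pi\tau).
\end{equation*}
Since $F$ is a pointwise infimum of affine functions of $\tau$, it is concave on $(0,\infty)$. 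Because $-\tfrac{n}{2}\log(4\pi\tau)$ is smooth on $(0,\infty)$, it suffices to show that $F$ is everywhere finite; a proper concave function is automatically continuous on the interior of its effective domain.

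To verify that $F(\tau) \in \IR$ for each $\tau > 0$, the upper bound $F(\tau) < \infty$ is immediate upon substituting any single admissible test function. For the lower bound, I would extend $\overline{\Omega}$ and the metric $g$ to a closed Riemannian manifold $(\bar M, \bar g)$. Every admissible $u \in C_0^{0,1}(\Omega)$ extends by zero to a Lipschitz function of unit $L^2$-norm on $\bar M$, whence $\mu(\Omega,g,\tau) \ge \mu(\bar M,\bar g,\tau)$, and the right-hand side is finite by Perelman's original bound on closed manifolds. Translating this back to $F$ yields the desired lower bound, and continuity follows.

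I do not anticipate a serious obstacle in this argument; the only mildly technical point is the existence of a smooth compact extension, which is routine (for instance, by enclosing $\overline{\Omega}$ in a slightly larger domain on which $g$ is defined and smooth, then doubling across a smooth boundary hypersurface). Positivity of the scalar curvature is not logically required for continuity per se, but it harmonizes with the fact that $A(u) \ge 0$, making $F$ additionally nondecreasing in $\tau$.
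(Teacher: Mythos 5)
Your argument is correct, and it takes a genuinely different route from the paper's. The paper's proof is quantitative: it observes that the precompactness of $\Omega$ and the positivity of $R$ give a local Sobolev inequality of the form $\bigl(\int u^{2n/(n-2)}\bigr)^{(n-2)/n}\le C_S\int(4|\nabla u|^2+Ru^2)$, and then runs the scale-comparison estimates of \cite[Lemma 17, Proposition 5]{LW20}, which control the difference of $\mu(\Omega,g,\cdot)$ at two scales explicitly in terms of the Sobolev constant. Your proof is soft: since $\tau\mapsto\tau A(u)-B(u)$ is affine for each fixed $u$, the infimum $F$ is concave on $(0,\infty)$, and a finite concave function on an open interval is automatically locally Lipschitz; all that remains is finiteness of $F(\tau)$, i.e.\ the mere validity of some logarithmic Sobolev inequality on $\Omega$ at each scale, which is standard. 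The application of the lemma in Section 4 only needs qualitative continuity near $\tau=1$, so your weaker conclusion suffices, while the paper's route buys an explicit modulus of continuity. Two small remarks. First, doubling a domain across a smooth boundary hypersurface does not in general produce a smooth metric (the doubled metric is only Lipschitz unless the boundary is totally geodesic); but since your test functions are supported in $\Omega$, you only need the closed manifold's metric to agree with $g$ on $\Omega$, so you may glue $g$ restricted to a neighborhood of $\overline{\Omega}$ to an arbitrary smooth metric on the double via a cutoff --- or avoid the extension altogether by deriving the lower bound on $F$ from the local Sobolev inequality together with Jensen's inequality, as in \cite{W18,LW20}. Second, your observation that positivity of $R$ is not logically required for continuity is accurate; in the paper it serves only to write the Sobolev inequality with $4|\nabla u|^2+Ru^2$ on the right-hand side.
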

\begin{proof}
This result follows directly from the proof of \cite[Lemma 17, Proposition 5]{LW20}. First of all, since $\Omega$ is precompact and the scalar curvature is positive on $M$, it is obvious that on $\Omega$ there is a Sobolev inequality as follows.
\begin{align}\label{local Sobolev}
      \left(\int u^{\frac{2n}{n-2}}\,dg\right)^{\frac{n-2}{n}}\le C_S\int\left(4|\nabla u|^2+Ru^2\right)\,dg,\qquad\text{ for all }\ u\in C_0^{0,1}(\Omega).
\end{align}
The crux of the proof in \cite{LW20} is to apply the Sobolev inequality \eqref{local Sobolev} above to obtain an estimate for the energy part of the $\WW$-functional, namely,
$$\tau\int\left(4|\nabla u|^2+Ru^2\right)\, dg.$$
With the validity of \eqref{local Sobolev}, the rest of the proof follows word-by-word from \cite[Lemma 17, Proposition 5]{LW20}.
\end{proof}

\begin{Lemma}\label{standard CHK concentration}
Under the assumption \eqref{smallness assumption}, we have
\begin{align}
    \dist_{W_1}^{g_t}(\tilde\mu_t,\delta_o)\le C(n)\sqrt{\tau_t}, \qquad\text{ for all }\ t\in(-\infty,1).
\end{align}
\end{Lemma}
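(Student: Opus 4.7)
The plan is to apply Kantorovich--Rubinstein duality, which for any Borel probability measure $\mu$ on a metric space $(X,d)$ and any point $p\in X$ gives the identity
\[
\dist_{W_1}^X(\mu,\delta_p)=\int_X d(x,p)\,d\mu(x).
\]
Specializing to $X=(M,g_t)$, $\mu=\tilde\mu_t$, and $p=o$ reduces the claim to establishing the first-moment bound $\int_M \dist_{g_t}(x,o)\,d\tilde\mu_t(x)\le C(n)\sqrt{\tau_t}$.

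Next, I would pull the computation back to the fixed shrinker $(M,g)$ via the canonical-form diffeomorphisms. Since $o$ is a minimum point of $f$ we have $\nabla f(o)=0$, hence $\phi_t(o)=o$ for every $t$, and the relation $g_t=\tau_t\phi_t^*g$ gives $\dist_{g_t}(x,o)=\sqrt{\tau_t}\,\dist_g(\phi_t(x),o)$ as well as $dg_t=\tau_t^{n/2}\phi_t^*dg$. Combining these with $\tilde f_t=f\circ\phi_t+\mu_g$ and the diffeomorphism invariance $\int_M (F\circ\phi_t)\,\phi_t^*dg=\int_M F\,dg$ turns $\tilde\mu_t$ into the time-independent probability measure $(4\pi)^{-n/2}e^{-\tilde f}\,dg$. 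The desired inequality thus reduces to
\[
\int_M \dist_g(y,o)\,(4\pi)^{-n/2}e^{-\tilde f(y)}\,dg(y)\le C(n).
\]

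To finish I would combine three ingredients. The Cao--Zhou estimate \eqref{Cao-Zhou} furnishes the quadratic lower bound $f(y)\ge\tfrac14(\dist_g(y,o)-5n)_+^2$, so $e^{-f(y)}$ decays Gaussian-fast in $\dist_g(y,o)$; the Munteanu--Wang polynomial volume bound \eqref{MW14} gives $\Vol_g(B(o,r))\le C_n r^n$; and Lemma \ref{lem: rough local mu bound}, together with the hypothesis $\mu(B(o,1/\delta),g,1)\ge-\delta\ge-1$, yields $\mu_g\ge-C(n)$, hence $e^{-\mu_g}\le C(n)$. Decomposing the integral into dyadic annuli around $o$ and inserting these estimates produces an absolutely convergent bound depending only on the dimension.

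The argument is largely mechanical; the only mildly subtle point is the uniform lower bound on the shrinker entropy $\mu_g$, without which the Gaussian tail of $e^{-\tilde f}$ could degenerate. This is precisely the place where the local smallness hypothesis \eqref{smallness assumption} enters the proof, through Lemma \ref{lem: rough local mu bound}.
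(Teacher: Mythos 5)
Your proposal is correct and follows essentially the same route as the paper: reduce to $t=0$ by exploiting that $o$ is fixed by $\phi_t$ and that $g_t=\tau_t\phi_t^*g$ scales $W_1$ distances by $\sqrt{\tau_t}$, then bound the first moment $\int_M \dist_g(\cdot,o)\,d\tilde\mu_0$ by $C(n)$ using the Cao--Zhou estimate, the Munteanu--Wang volume bound \eqref{MW14}, and the lower bound on $\mu_g$ from Lemma \ref{lem: rough local mu bound}. The only cosmetic difference is that you invoke Kantorovich--Rubinstein as an equality while the paper only uses the one-sided bound $\dist_{W_1}(\tilde\mu_0,\delta_o)\le\int\dist_g(x,o)\,d\tilde\mu_0$ from the dual formulation \eqref{W1}; both are valid.
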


\begin{proof}
Since $o$ is static under the flow $\phi_t$ defined in \eqref{canonical form}, we have
\begin{align*}
     \dist_{W_1}^{g_t}(\tilde\mu_t,\delta_o)=\dist_{W_1}^{\tau_t\phi_t^*g}(\phi_t^*\tilde\mu_0,\phi_t^*\delta_o)=\sqrt{\tau_t}\dist_{W_1}^{g}(\tilde\mu_0,\delta_o),
\end{align*}
where the last equation above is from the fact that $h$ is $1$-Lipschitz with respect to $g$ if and only if $\sqrt{\tau_t}h\circ\phi_t$ is $1$-Lipschitz with respect to $g_t$. Thus, if suffices to obtain an estimate like \begin{align*}
    \dist_{W_1}^g(\tilde \mu_0,\delta_o)\le C(n).
\end{align*}

By the assumption \eqref{smallness assumption} and Lemma \ref{lem: rough local mu bound}, we have a lower bound of $\mu_g$ depending only on the dimension. Thus, \eqref{Cao-Zhou} implies that
\begin{align}\label{New Cao-Zhou}
   \frac{1}{4}\left(\dist_g(x,o)-C(n)\right)_+^2\le \tilde f(x)\le \frac{1}{4}\left(\dist_g(x,o)+C(n)\right)^2.
\end{align}
On the other hand, for any bounded $1$-Lipschitz function $h$ (with respect to $g$), we have
\begin{align*}
    \int_M h\,d\tilde\mu_0-\int_M h\,d\delta_o&=\int_M\big(h(x)-h(o)\big)(4\pi)^{-\frac{n}{2}}e^{-\tilde f(x)}\,dg(x)
    \\
    &\le \int_M\dist_g(x,o)(4\pi)^{-\frac{n}{2}}e^{-\tilde f(x)}\,dg(x).
\end{align*}
It follows immediately from \eqref{MW14}, \eqref{W1}, and \eqref{New Cao-Zhou} that
\begin{align*}
    \dist_{W_1}^{g}\big(\tilde\mu_0,\delta_o\big)\le \int_M\dist_g(x,o)(4\pi)^{-\frac{n}{2}}e^{-\tilde f(x)}\,dg(x)\le C(n).
\end{align*}
We have finished the proof of the Lemma.
\end{proof}

\begin{proof}[Another proof of Theorem \ref{thm: local gap}]
Let $(M,g_t)_{t\in(-\infty,1)}$ be the canonical form defined in \eqref{canonical form} of a shrinker with bounded curvature satisfying \eqref{smallness assumption}. Let $o$ be a 
minimum point of $f$.

In order to deal with the diffeomorphism in the definition of the canonical form, we shall consider sub-level-sets of $f_t=f\circ\phi_t$ instead of geodesic balls. Define
\begin{align}\label{def of Dt}
    D_t(r):=\left\{f_t\le \tfrac{1}{4}r^2\right\},\qquad  t\in(-\infty,1),\ \ r^2>0.
\end{align}
Then, by \eqref{Cao-Zhou}, it is easy to see that
\begin{align*}
    B_{g_t}\Big(o,(r-\sqrt{2n}\,)\sqrt{\tau_t}\Big)\subset D_t(r)\subset B_{g_t}\Big(o,(r+5n)\sqrt{\tau_t}\Big),\qquad\text{ for all }\ t\in(-\infty,1)\ \text{ and }\ r>\sqrt{2n}.
\end{align*}
Thus, if we take $\delta\le\overline{\delta}(n)\ll 1$, then \eqref{smallness assumption} can be rewritten as 
\begin{align}\label{new smallness assumption}
    \mu\big(D_0(1/\delta),g_0,1\big)\ge -\delta.
\end{align}
In addition, 
\begin{align*}
    \phi_t: \Big(D_t(r),g_t\Big)&\to \Big(D_0(r),\tau_t g\Big)
\end{align*}
is obviously an isometry, which implies that
\begin{align}\label{equivalence of mu}
    \mu\big(D_t(1/\delta),g_t,s\tau_t\big)\equiv\mu\big(D_0(1/\delta),g,s\big),\qquad\text{ for all }\ t\in(-\infty,1) \text{ and } s>0.
\end{align}

Next, we fix an arbitrary point $x\in M$. Let $r\ge 1$ and let $(z,-r^2)$ be an $H_n$-center of $(x,0)$.
By the monotonicity of the $W_1$-Wasserstein distance and Lemma \ref{standard CHK concentration}, we may estimate
\begin{align*}
    \dist_{g_{-r^2}}(z,o)&=\dist^{g_{-r^2}}_{W_1}(\delta_z,\delta_o)
    \\
    &\le \dist^{g_{-r^2}}_{W_1}(\delta_z,\nu_{x,0\,|\,-r^2})+\dist^{g_{-r^2}}_{W_1}(\tilde\mu_{-r^2},\nu_{x,0\,|\,-r^2})+\dist^{g_{-r^2}}_{W_1}(\tilde\mu_{-r^2},\delta_o)
    \\
    &\le \sqrt{H_n}r+\dist_{W_1}^{g_0}(\tilde\mu_0,\delta_x)+C(n)\sqrt{\tau_{-r^2}}
    \\
    &\le \left(C(n)+\frac{\dist_{W_1}^{g_0}(\tilde\mu_0,\delta_x)}{\sqrt{\tau_{-r^2}}}\right)\sqrt{\tau_{-r^2}}.
\end{align*}
By the same argument as in the proof of Lemma \ref{standard CHK concentration}, $\dist_{W_1}^{g_0}(\tilde\mu_0,\delta_x)$ is finite. Thus, taking $r\ge\overline{r}(n,x)$ such that
$$\frac{\dist_{W_1}^{g_0}(\tilde\mu_0,\delta_x)}{\sqrt{\tau_{-r^2}}}\le 1,$$
we have $z\in D_{-r^2}\big(C(n)+1\big)$, and hence, taking $\delta\le \overline{\delta}(n)$,
$$B_{g_{-r^2}}\big(z,\tfrac{1}{2}\delta^{-1}r\big)\subset D_{-r^2}(1/\delta).$$

Finally, applying Theorem \ref{thm: Nash depends on nu} at $(x,0)$ with $2A\sqrt{H_n}=\frac{1}{2}\delta^{-1}$ and $\Rmin=0$ leads to
\begin{align*}
    \NN_{x,0}(r^2)&\ge \mu\left(B_{g_{-r^2}}\left(z,\tfrac{1}{2}\delta^{-1}r\right),g_{-r^2},r^2\right)-C_n\delta^2 e^{-c_n\delta^{-2}}
    \\
    &\ge \mu\left(D_{-r^2}(1/\delta),g_{-r^2},\tfrac{r^2}{1+r^2}\tau_{-r^2}\right)-C_n\delta^2 e^{-c_n\delta^{-2}}
    \\
    &=\mu\left(D_0(1/\delta),g,\tfrac{r^2}{1+r^2}\right)-C_n\delta^2 e^{-c_n\delta^{-2}}
    \\
    &\ge \mu\left(D_0(1/\delta),g,1\right)-\delta-C_n\delta^2 e^{-c_n\delta^{-2}}
    \\
    &\ge -2\delta-C_n\delta^2 e^{-c_n\delta^{-2}},
\end{align*}
if we take $r\ge \overline{r}(n,x,\delta)$, where we have also applied \eqref{new smallness assumption}, \eqref{equivalence of mu}, and Lemma \ref{continuity}. Therefore, if $\delta\le\overline{\delta}(n)$ such that 
$$2\delta+C_n\delta^2 e^{-c_n\delta^{-2}}\le \varepsilon(n),$$
 where $\varepsilon(n)$ is the constant in Theorem \ref{epsilon regularity}, then we have
 $$\NN_{x,0}(r^2)\ge -\varepsilon(n),\qquad \text{ for all }\ r\ge \overline{r}(n,x,\delta).$$
We may conclude by Theorem \ref{epsilon regularity} that $r_{\Rm}(x,0)=\infty$. Hence the shrinker in question must be flat.
\end{proof}

\section{Proof of Theorem \ref{rm gap}}

In this section, we prove Theorem \ref{rm gap}. We first provide a local injectivity radius estimate at the minimum point $o$ of the potential function. Then, we apply a result by Hamilton \cite{Ham95} to show an almost Euclidean isoperimetric inequality on a ball of large radius centered at $o$. Theorem \ref{rm gap} then follows from Corollary \ref{coro}. 
Indeed, it can be seen from the proof of the Gromoll-Meyer theorem \cite{GM69} that,
 on a complete Riemannian manifold with sectional curvature bounded from above by a positive constant $K$, if there is a proper $C^2$ strictly convex function, then the injectivity radius of $M$ is bounded from below by $\frac{\pi}{\sqrt{K}}$ (see also \cite[Theorem B.65]{CD04}); a local versinon of this statement is sufficient for our purpose.

\begin{Lemma}\label{inj estimate} Let $(M^n,g)$ be an $n$-dimensional complete Riemannian manifold. 
Let $r>0$, $p_0\in M$, and $F:B(p_0,r)\to\mathbb{R}$ be a strictly convex function. Assume that
the sectional curvature is bounded from above by $r^{-2}$ on $B_g(p_0, r)$, and that $\{F\leq s\}\subseteq B_g(p_0, r/2)$ for some $s\in \mathbb{R}$. Then for any $x\in \{F\leq s\}$, the injectivity radius at $x$ satisfies
\[
\operatorname{inj}_g(x)\geq \frac{r}{2}.
\]
\end{Lemma}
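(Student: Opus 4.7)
The plan is to localize the classical Gromoll--Meyer argument: strict convexity of $F$ must obstruct the existence of short smooth closed geodesics inside $B_g(p_0,r)$. First I would note that $K:=\{F\le s\}$ is compact, since it is closed and contained in $\overline{B_g(p_0,r/2)}$ (which is compact by Hopf--Rinow). Hence $\operatorname{inj}_g$ attains its minimum on $K$. Among all minimizers I would pick a distinguished $x^*\in K$ that in addition minimizes $F$. This small bookkeeping trick is what will collapse a delicate boundary case into the interior one below.

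Set $\rho:=\operatorname{inj}_g(x^*)$ and suppose for contradiction $\rho<r/2$. Since any geodesic of length less than $r/2$ starting from $x^*\in B_g(p_0,r/2)$ remains inside $B_g(p_0,r)$, the curvature bound $\mathrm{sec}\le r^{-2}$ and Rauch comparison give $\operatorname{conj}_g(x^*)\ge \pi r>\rho$, so the cut point $y$ realizing $\rho$ is not conjugate. Klingenberg's lemma then provides two distinct minimizing geodesics $\alpha,\beta:[0,\rho]\to M$ from $x^*$ to $y$ with $\alpha'(\rho)=-\beta'(\rho)$. Their concatenation $\tilde\alpha=\alpha\ast\overline\beta:[0,2\rho]\to M$ is smooth on $(0,2\rho)$, closes up at $x^*$ (with a possible corner there), and is contained in $B_g(x^*,\rho)\subseteq B_g(p_0,r)$; hence $F\circ\tilde\alpha$ is a smooth strictly convex function on $(0,2\rho)$.

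If $F(x^*)<s$, then by continuity $x^*$ is interior to $K$ in $M$, so $x^*$ is a genuine \emph{local} minimum of $\operatorname{inj}_g$ on $M$. The Klingenberg--Cheeger lemma then upgrades $\tilde\alpha$ to a smooth closed geodesic $\gamma:S^1\to M$ of length $2\rho<r$ contained in $B_g(p_0,r)$, and $F\circ\gamma$ is a strictly convex function on $S^1$, contradicting the nonpositivity of its second derivative at a maximum. If instead $F(x^*)=s$, then $F\circ\tilde\alpha$ is strictly convex on $[0,2\rho]$ with equal endpoint values $s$, so $F(y)=F(\tilde\alpha(\rho))<s$, i.e.\ $y$ is interior to $K$. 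Symmetry of the cut locus gives $\operatorname{inj}_g(y)\le d_g(y,x^*)=\rho$, and combined with $y\in K$ and the minimality of $\rho$ on $K$ this forces $\operatorname{inj}_g(y)=\rho$. But then $y$ is a minimizer of $\operatorname{inj}_g|_K$ with $F(y)<F(x^*)$, contradicting the choice of $x^*$. Either way we reach a contradiction, so $\rho\ge r/2$ as required.

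The main obstacle is invoking the correct form of the classical Klingenberg and Klingenberg--Cheeger lemmas: the former (minimality of the cut-time function on $T_{x^*}M$) supplies antiparallel tangents at $y$, while the latter (local minimality of $\operatorname{inj}_g$ in $M$) supplies antiparallel tangents at $x^*$ as well, promoting the geodesic loop to a genuinely smooth closed geodesic. The auxiliary selection of $x^*$ minimizing $F$ among $\operatorname{inj}$-minimizers on $K$ is the little trick that cleanly rules out the boundary case, without having to artificially enlarge the sublevel $s$ or perturb the construction.
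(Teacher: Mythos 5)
Your proof is correct and rests on the same core mechanism as the paper's: Klingenberg's lemma produces a geodesic loop of length $2\operatorname{inj}_g$ based at a point minimizing the injectivity radius over the sublevel set, and strict convexity of $F$ along geodesics forbids such a loop. The execution differs in two places, though. Where you handle the possible corner at the base point by a case split --- invoking the first-variation (Klingenberg--Cheeger) argument when $x^*$ is interior to $K$, and, when $F(x^*)=s$, trading $x^*$ for the loop's midpoint $y$ via your auxiliary tie-breaking minimization of $F$ --- the paper instead observes (by the same convexity argument you use) that the midpoint $q_\infty$ always lies in $A=\{F\le s\}$, hence satisfies $\operatorname{inj}_g(q_\infty)=\iota_0$ with $p_\infty$ its nearest cut point, and applies Klingenberg's lemma a \emph{second} time with base point $q_\infty$ to rule out the corner at $p_\infty$; this treats the interior and boundary cases uniformly and avoids the first-variation lemma altogether (your boundary case, in turn, avoids producing the closed geodesic at all, which is a nice alternative). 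The paper also runs the argument on a minimizing sequence with a compactness/limit step, whereas your direct selection of a minimizer is cleaner. One small imprecision to fix: the curvature bound is only assumed on $B_g(p_0,r)$, so Rauch yields $\operatorname{conj}_g(x^*)\ge\min\{\pi r,\,r/2\}=r/2$, not $\pi r$; since $\rho<r/2$ this does not affect the conclusion that the nearest cut point is non-conjugate.
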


\begin{proof}Let $A:=\{F\leq s\}$. By the Rauch comparison theorem, we have that the conjugate radius at each $x\in A$ is bounded from below by $\min\{\pi r, r/2\}=r/2$ \cite[Theorem B.20]{CD04}. Suppose by contradiction that the conclusion of the lemma fails. Then
\[
\iota_0:=\inf\{\operatorname{inj}_g(y): y\in A\}<\frac{r}{2}.
\]
By the compactness of $A$ and the conjugate radius bound,  we have $\iota_0>0$. Let $\{p_m\}$ be a sequence in $A$ such that  $\text{inj}_g(p_m)\to \iota_0$ as $m\to \infty$. By passing to subseqence and by a result of Klingenberg \cite[Lemma 1]{K59} (see also \cite[Prop 2.12, Ch.13]{doC92}), $p_m\to p_{\infty}\in A$ and there is a sequence of unit-speed geodesics $\gamma_m:[0,2I_m]\to M$, where $I_m=\text{inj}_g(p_m)$, with $\gamma_m(0)=\gamma_m(2I_m)=p_m$ such that $q_m:=\gamma_m(I_m)$ is a point on the cut locus of $p_m$ that realizes the distance between $p_m$ and its cut locus. By the compactness of $A$, we may further suppose that $q_m\to q_\infty$ and $\gamma_m\to \gamma_\infty$, where $\gamma_\infty:[0,2\iota_0]\to M$ satisfies $\gamma_\infty(0)=\gamma_\infty(2\iota_0)=p_\infty$, and $q_\infty=\gamma_{\infty}(\iota_0)$ is a point on the cut locus of $p_\infty$ which realizes the distance between $p_\infty$ and its cut locus. Note that in general $\gamma_m$ may have corner at $p_m$, i.e. $\gamma_m'(0)\neq \gamma'_m(2I_m)$. We will prove that this doesn't happen in the limit $\gamma_\infty$. As $p_\infty\in A$ and $\iota_0<r/2$, we have that $\gamma_\infty$ is contained in $B_g(p_0,r)$. We then show that $\gamma_\infty$ lies inside $A$. The function $F\circ\gamma_\infty$ has positive second derivative, and hence, by the property of convex functions, we have
\[
F\circ\gamma_\infty(t)\le F\circ\gamma_\infty(0)=F\circ\gamma_\infty(2\iota_0)=F(p_{\infty})\leq s.
\]
As a consequence, $\gamma_\infty$ and, in particular, $q_{\infty}$ lies in $A$. By the minimality of $\iota_0$, we may apply the same argument as in \cite[Prop 2.12, Ch.13]{doC92} to $q_{\infty}$ to see that $\gamma'_\infty(0)=\gamma'_\infty(2\iota_0)$, and thus $\gamma_\infty$ is a closed smooth geodesic loop without any corner. This implies that the function $F\circ\gamma_\infty(t)$ can be extended to a smooth nonconstant $2\iota_0$ periodic function on $\mathbb{R}$, and thus attains its maximum value, which is impossible for a strictly convex function on $\mathbb{R}$. 
\end{proof}

To prove the gap theorem, we also need the following estimate of the metric in its geodesic coordinates by Hamilton \cite[Theorems 4.9 and 4.10]{Ham95}.

\begin{Theorem}\label{geod est}\emph{\cite[Theorems 4.9 and 4.10]{Ham95} }
There exist dimensional constants $c$ and $C_0$ with the following property. Let $(M,g)$ be a Riemannian manifold. Assume  that $|{\Rm}|_g\le B_0$ on $B_g(p,s)$ for some constant $B_0>0$ and that the injectivity radius at $p\in M$ is no less than $s>0$, where $s\le c/\sqrt{B_0}$. Then it holds that 
\[
|g_E-g|_g\leq 4B_0C_0s^2 \ \ \text{ on $B_g(p,s)$},
\]
where $g_E$ 
is the Euclidean metric in the geodesic coordinates at $p$, namely, the local pull-back of the Euclidean metric $g_p$ on $T_pM$ by  $\exp_p^{-1}$.
\end{Theorem}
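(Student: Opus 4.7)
The plan is to express the discrepancy $g - g_E$ through the Jacobi field description of the differential of $\exp_p$, and then estimate this discrepancy via an ODE comparison with the flat case under the curvature bound. Since $\operatorname{inj}_g(p) \ge s$, the map $\exp_p$ is a diffeomorphism from $\{v \in T_pM : |v|_{g_p} < s\}$ onto $B_g(p, s)$, so $g_E$ is a well-defined smooth Riemannian metric on $B_g(p,s)$. Fix an arbitrary $q \in B_g(p,s)$, set $v = \exp_p^{-1}(q)$, and consider the radial geodesic $\gamma(t) = \exp_p(tv)$ for $t \in [0,1]$. Standard variational theory identifies $(d\exp_p)_v(w) = J_w(1)$, where $J_w$ is the Jacobi field along $\gamma$ with $J_w(0) = 0$ and $\nabla_{\gamma'}J_w(0) = w$.

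Define $A(t): T_pM \to T_{\gamma(t)}M$ by $A(t)w := J_w(t)$, let $P(t)$ denote parallel transport along $\gamma$ from $p$ to $\gamma(t)$, and set $\tilde A(t) := P(t)^{-1} A(t) \in \operatorname{End}(T_pM)$. Then $\tilde A$ satisfies the linear ODE
\[
\tilde A''(t) + \tilde R(t)\tilde A(t) = 0, \qquad \tilde A(0) = 0, \qquad \tilde A'(0) = I,
\]
where $\tilde R(t)w = P(t)^{-1}\Rm_{\gamma(t)}(\gamma'(t), P(t)w)\gamma'(t)$ is self-adjoint on $(T_pM, g_p)$ with operator norm at most $B_0 |v|_{g_p}^2 \le B_0 s^2$. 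The Euclidean counterpart is $B(t) = tI$, which solves $B'' = 0$ with the same initial data. A direct Gronwall estimate on $\tilde A'' = -\tilde R \tilde A$ shows that, provided $s \le c/\sqrt{B_0}$ with $c$ a small dimensional constant, $|\tilde A(t)|_{g_p} \le 2t$ on $[0,1]$, and then the Duhamel identity
\[
\tilde A(t) - tI = -\int_0^t\!\!\int_0^\tau \tilde R(\sigma)\tilde A(\sigma)\, d\sigma\, d\tau
\]
yields $|\tilde A(1) - I|_{g_p} \le C_1 B_0 s^2$ for a dimensional constant $C_1$.

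Translating this operator bound to a metric bound, one observes that under geodesic coordinates $(\exp_p^* g)_v(w_1, w_2) = \langle \tilde A(1) w_1, \tilde A(1) w_2\rangle_{g_p}$ and $(\exp_p^* g_E)_v(w_1, w_2) = \langle w_1, w_2\rangle_{g_p}$. Hence the $(0,2)$-tensor $g - g_E$ is represented by the self-adjoint endomorphism $\tilde A(1)^* \tilde A(1) - I$, whose $g_p$-operator norm is bounded by $2|\tilde A(1) - I|_{g_p} + |\tilde A(1) - I|_{g_p}^2 \le 3C_1 B_0 s^2$ whenever $C_1 B_0 s^2 \le 1$. The principal obstacle I anticipate is the seeming circularity of the final step: the statement requires a bound in the $g$-norm rather than the Euclidean one, and the equivalence of the two norms on $(0,2)$-tensors is itself controlled by a bound of the form we are establishing. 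This is resolved by a standard continuity (bootstrap) argument: one works on the supremum of radii $s' \le s$ on which the provisional estimate $|g - g_E|_g \le 4 B_0 C_0 (s')^2$ holds, notes that the argument above upgrades it to the strictly better bound $3 C_1 B_0 (s')^2$ in the $g_p$-norm, and passes back to the $g$-norm at the cost of a factor $1 + O(B_0 s^2)$. Choosing $c$ sufficiently small and $C_0$ accordingly absorbs this factor and closes the bootstrap, producing the asserted bound.
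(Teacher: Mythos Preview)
The paper does not supply its own proof of this theorem; it is quoted verbatim as \cite[Theorems 4.9 and 4.10]{Ham95} and used as a black box in the proof of Theorem~\ref{rm gap}. Your argument via the Jacobi endomorphism $\tilde A(t)$ and the Duhamel/Gronwall comparison with $tI$ is essentially Hamilton's, and it is correct.

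One remark on the final step: the bootstrap you set up to pass from the $g_p$-norm to the $g$-norm is not needed. Once you have established $|\tilde A(1)^*\tilde A(1)-I|_{g_p}\le 3C_1B_0s^2$, you already know that the eigenvalues of $g$ relative to $g_E$ lie in $[1-3C_1B_0s^2,\,1+3C_1B_0s^2]\subset[\tfrac12,\tfrac32]$ as soon as $s\le c/\sqrt{B_0}$ with $c$ chosen so that $3C_1c^2\le\tfrac12$. This eigenvalue control gives directly, for any symmetric $(0,2)$-tensor $T$, the inequality $|T|_g\le 2|T|_{g_E}$, and hence $|g-g_E|_g\le 6C_1B_0s^2$. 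No continuity argument in $s'$ is required, since the $g_E$-norm estimate was obtained without any a~priori assumption on $|g-g_E|_g$.
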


\begin{proof}[Proof of Theorem \ref{rm gap}]
   Let us consider a Ricci shrinker $(M^n,g,f)$ satisfying the condition prescribed in the statement of Theorem \ref{rm gap}. Let $\ve>0$ be some dimensional constant to be determined. By the assumption on the curvature radius, we have
\begin{align*}
    |{\Rm}|\le \ve^2\ \ \ \text{ on $\ B_g(o,1/\ve)$}.
\end{align*}
Consequently, the smallness of the Ricci tensor implies that, if $\varepsilon\le\overline{\varepsilon}(n)\ll 1$, then 
\[
0<\frac{1}{4}g\le \left(\frac{1}{2}-c_n\ve^2\right)g\le \nabla^2 f\le  \left(\frac{1}{2}+c_n\ve^2\right)g\le \frac{3}{4}g,\ \ \ \text{ on $\ B_g(o, 1/\ve)$.}
\]
Hence $f$ is striclty convex on $B_g(o, \ve^{-1})$. Moreover, by \eqref{Cao-Zhou}, for any $r\ge \sqrt{2n}$, we have
\[
B_g(o,r-\sqrt{2n}\, )\subseteq D_0(r)\subset B_g(o,r+5n),
\]
where $D_0(r)$ is the sub-level set of $f$ defined as in \eqref{def of Dt}. Hence, letting $r=\frac{1}{4\ve}$, if $\ve\le \overline{\ve}(n)\ll 1$, then $D_0\big(1/(4\varepsilon)\big)\subseteq B_g\big(o,1/(2\ve)\big)$. By Lemma \ref{inj estimate}, $\operatorname{inj}_g(o)\ge 1/(8\ve)$. Applying Theorem \ref{geod est} to $B_g\big(o,s\big)$ with $B_0=\ve^2$ and $s=c'/\ve$, where $c'\in\left(0,\min\{\frac{1}{8},c\}\right]$ is to be fixed, we have
\begin{equation}\label{g vs d}
    |g_E-g|_g\leq 4C_0c'^2, \ \ \ \text{ on $\ B_g(o, c'/\ve)$},
\end{equation}
where $g_E$ is the local pull-back of the Euclidean metric $g_o$ on $T_oM$ by $\exp_o^{-1}$.

Next, let $\delta=\delta(n)>0$ be the small constant in Corollary \ref{coro}.
In view of \eqref{g vs d}, if we take $c'\le\overline{c'}(n,\delta)\ll 1$, then the almost Euclidean isoperimetric inequality holds on $B_g(o, c'/\ve)$, namely, 
\begin{equation}\label{isoper}
\left(\operatorname{Area}_{g}(\partial \Omega)\right)^{n} \geq(1-\delta) n^n\omega_{n}\left(\operatorname{Vol}_{g}(\Omega)\right)^{n-1}\qquad\text{ for any regular } \Omega\subset B_g(o, c'/\ve).
\end{equation}
Indeed, to see \eqref{isoper}, 
we may apply \eqref{g vs d} to obtain
\[
(1-c_nc'^2)g_E\le g\le (1+c_nc'^2)g_E, \ \ \ \text{ on $\ B_g(o, c'/\ve)$}.
\]
where $c_n$ is a dimensional constant. Hence for any regular $\Omega\subset B_g(o, c'/\ve)$, it holds that
\[
 (1-c_nc'^2)^{\frac{n-1}{2}}\operatorname{Area}_{g_E}\left(\partial \Omega\right)\le \operatorname{Area}_{g}\left(\partial \Omega\right)\le (1+c_nc'^2)^{\frac{n-1}{2}}\operatorname{Area}_{g_E}\left(\partial \Omega\right)
\]
\[
(1-c_nc'^2)^{\frac{n}{2}} \operatorname{Vol}_{g_E}\left(\Omega\right) \le \operatorname{Vol}_{g}\left(\Omega\right)\le (1+c_nc'^2)^{\frac{n}{2}} \operatorname{Vol}_{g_E}\left(\Omega\right).
\]
Thus, if we take $0<c'\le \overline{c'}(n,\delta)\ll 1$, then
\begin{eqnarray*}
\frac{\left(\operatorname{Area}_{g}(\partial \Omega)\right)^{n}}{\left(\operatorname{Vol}_{g}(\Omega)\right)^{n-1}}&\ge& \frac{ (1-c_nc'^2)^{\frac{n(n-1)}{2}}}{(1+c_nc'^2)^{\frac{n(n-1)}{2}}}\frac{\left[ \operatorname{Area}_{g_E}\left(\partial \Omega\right)\right]^{n}}{\left[\operatorname{Vol}_{g_E}\left(\Omega\right)\right]^{n-1}} \\
&\ge&\frac{ (1-c_nc'^2)^{\frac{n(n-1)}{2}}}{(1+c_nc'^2)^{\frac{n(n-1)}{2}}}n^n\omega_{n}
\\
&\ge& (1-\delta)n^n\omega_{n},
\end{eqnarray*}
where we also used the Euclidean isoperimetric inequality in the second line; \eqref{isoper} is proved. Lastly, we pick $\ve\le\overline{\ve}(n,c',\delta)$ such that $\frac{c'}{\ve}>\frac{1}{\delta}$; note that the upper bound of $\ve$ depend only on $n$, since our choice of $c'$ and $\delta$ also depend only on $n$. 
This shows that the lower bound of the isoperimetric constant of $B_g(o,1/\delta)$ is $\delta$-close to that of the Euclidean space.
By Corollary \ref{coro}, 
the shrinker is the flat gaussian soliton. This completes the proof of Theorem \ref{rm gap}.  
\end{proof}


\begin{thebibliography}{CCG{\alphalchar{+}}10}












 













\bibitem[Bam20a]{Bam20a}
Bamler, Richard~H. \emph{Entropy and heat kernel bounds on a Ricci flow background}.
arXiv preprint arXiv:2008.07093 (2020).

\bibitem[Bam20b]{Bam20b}\bysame, \emph{Compactness theory of the space of super Ricci flows}.  arXiv preprint arXiv:2008.09298 (2020).
  
\bibitem[Bam20c]{Bam20c} \bysame, \emph{Structure theory of non-collapsed limits of Ricci flows}. arXiv preprint  arXiv:2009.03243 (2020).



 









\bibitem[CZ10]{CZ10}Cao, Huai-Dong; Zhou, De-Tang. \emph{On complete gradient
shrinking Ricci solitons.} J. Differential Geom. \textbf{85} (2010), 175--186.

\bibitem[CZ11]{CZ11} Cao, Xiaodong; Zhang, Qi S.  \emph{The conjugate heat equation and ancient solutions of the Ricci flow.} Adv. Math. \textbf{228} (2011), no. 5, 2891–2919.


\bibitem[CN09]{CN09} Carrillo, Jos\'{e}; Ni, Lei. \emph{Sharp logarithmic Sobolev inequalities on gradient solitons and applications}, Comm. Anal. Geom. \textbf{17} (2009), 721--753.



\bibitem[CMZ21]{CMZ21} Chan, Pak-Yeung, Zilu Ma, and Yongjia Zhang. \emph{A local Sobolev inequality on Ricci flow and its applications.} arXiv preprint arXiv:2111.05517 (2021).
\bibitem[CCL22]{CCL22} Chan, Pak-Yeung; Chen, Eric; Lee, Man-Chun \emph{Small curvature concentration and Ricci flow smoothing}. J. Funct. Anal. 282 (2022), no. 10, Paper No. 109420, 29 pp.

\bibitem[Che09]{Che09}
Chen, Bing-Long,
\emph{Strong uniqueness of the Ricci flow}. J. Differential Geom. 82 (2009), 363--382.

\bibitem[CD04]{CD04}
Chow, Bennett; Knopf, Dan \emph{The Ricci flow: an introduction}. Mathematical Surveys and Monographs, 110. American Mathematical Society, Providence, RI, 2004.

\bibitem[CCGGIIKLLN10]{RFV3}
 Chow, B.; Chu, S.; Glickenstein, D.; Guenther, C.; Isenberg, J.; Ivey, T.; Knopf, D.; Lu, P.; Luo, F.; Ni, L. \emph{The Ricci flow: techniques and applications. Part III. Geometric-Analytic Aspects}, Mathematical Surveys and Monographs,  vol. \textbf{163}, AMS, Providence, RI, 2010.











 
 



\bibitem[doC92]{doC92} do Carmo, Manfredo Perdig\~{a}o \emph{Riemannian geometry}. Translated from the second Portuguese edition by Francis Flaherty. Mathematics: Theory $\&$ Applications. Birkh\"{a}user Boston, Inc., Boston, MA, 1992.









\bibitem[GM69]{GM69} Gromoll, Detlef; Meyer, Wolfgang \emph{ On complete open manifolds of positive curvature}. Ann. of Math. (2) 90 (1969), 75–90. 














\bibitem[Ham95]{Ham95} Hamilton, Richard S. \emph{A compactness property for solutions of the Ricci flow}. Amer. J. Math. 117 (1995), no. 3, 545–572.

\bibitem[K59]{K59} Klingenberg, W. \emph{Contributions to Riemannian geometry in the large}. Ann. of Math. (2) 69 (1959), 654–666.


\bibitem[LW19]{LW19} Li, Yu, and Bing Wang. \emph{The rigidity of Ricci shrinkers of dimension four.} Transactions of the American Mathematical Society 371.10 (2019): 6949-6972.

\bibitem[LW20]{LW20} Li, Yu; Wang, Bing. \emph{Heat kernel on Ricci shrinkers}. Calc. Var. Partial Differential Equations 59 (2020), no. 6, Paper No. 194, 84 pp.

\bibitem[LW21]{LW21}Li, Yu, and Bing Wang. \emph{Rigidity of the round cylinders in Ricci shrinkers.} arXiv preprint arXiv:2108.03622 (2021).

\bibitem[LLW21]{LLW21} Li, Haozhao; Li, Yu; Wang, Bing. \emph{ On the structure of Ricci shrinkers.} J. Funct. Anal. 280 (2021), no. 9, Paper No. 108955, 75 pp.








\bibitem[MM15]{MM15} Mantegazza, Carlo, and Reto Müller. \emph{Perelman's entropy functional at Type I singularities of the Ricci flow.} Journal f\"ur die reine und angewandte Mathematik (Crelles Journal) 2015.703 (2015): 173-199.


\bibitem[MW14]{MW14} Munteanu, Ovidiu; Wang, Jiaping. \emph{ Geometry of manifolds with densities.} Adv. Math. 259 (2014), 269–305.

\bibitem[MW11]{MW11}Munteanu, Ovidiu; Wang, Mu-Tao \emph{The curvature of gradient Ricci solitons}. Math. Res. Lett. 18 (2011), no. 6, 1051–1069.

\bibitem[Per02]{Per02} Perelman, Grisha, \emph{The entropy formula for the Ricci flow and its geometric applications}, arXiv:math.DG/0211159 (2002).





\bibitem[PRS11]{PRS11}Pigola, Stefano; Rimoldi, Michele; Setti, Alberto G. \emph{  Remarks on non-compact gradient Ricci solitons}. Math. Z. 268 (2011), no. 3-4, 777–790. 


\bibitem[TZ21]{TZ21}Tian, Gang, and Zhenlei Zhang. \emph{Relative volume comparison of Ricci flow.} Sci. China Math. 64, 1937–1950 (2021). https://doi.org/10.1007/s11425-021-1869-5


\bibitem[W18]{W18} Wang, Bing \emph{ The local entropy along Ricci flow Part A: the no-local-collapsing theorems}. Camb. J. Math. 6 (2018), no. 3, 267–346.

\bibitem[W20]{W20} Wang, Bing. \emph{The local entropy along Ricci flow---Part B: the pseudo-locality theorems.} arXiv preprint arXiv:2010.09981 (2020).

\bibitem[WW09]{WW09}Wei, Guofang; Wylie, William. \emph{Comparison geometry for the Bakry-Emery Ricci tensor.} J. Differential Geom. 83 (2009), no. 2, 377–405. 

\bibitem[W08]{W08} Wylie, William. \emph{Complete shrinking Ricci solitons have finite fundamental group.} Proceedings of the American Mathematical Society 136.5 (2008): 1803-1806.



\bibitem[Yo09]{Yo09} Yokota, Takumi. \emph{Perelman's reduced volume and a gap theorem for the Ricci flow}.
Communications in analysis and geometry, 17.2(2009): 227-263.


\bibitem[Yo12]{Yo12} Yokota, Takumi. \emph{Addendum to ``Perelman’s reduced volume and a gap theorem for the Ricci flow''}. Communications in Analysis and Geometry 20.5 (2012): 949-955.

\bibitem[Zh18]{Zh18} Zhang, Shijin \emph{A gap theorem on complete shrinking gradient Ricci solitons}. Proc. Amer. Math. Soc. 146 (2018), no. 1, 359–368.

\bibitem[Zh20]{Zh20} Zhang, Zhuhong \emph{A gap theorem of four-dimensional gradient shrinking solitons}. Comm. Anal. Geom. 28 (2020), no. 3, 729–742

\end{thebibliography}
\bibliographystyle{amsalpha}

\newcommand{\alphalchar}[1]{$^{#1}$}
\providecommand{\bysame}{\leavevmode\hbox to3em{\hrulefill}\thinspace}
\providecommand{\MR}{\relax\ifhmode\unskip\space\fi MR }
\providecommand{\MRhref}[2]{%
  \href{http://www.ams.org/mathscinet-getitem?mr=#1}{#2}
}
\providecommand{\href}[2]{#2}

\noindent Department of Mathematics, University of California, San Diego, CA 92093, USA
\\ E-mail address: \verb"pachan@ucsd.edu "
\\


\noindent Department of Mathematics, Rutgers University, Piscataway, NJ 08854, USA
\\ E-mail address: \verb"zilu.ma@rutgers.edu"
\\

\noindent School of Mathematical Sciences, Shanghai Jiao Tong University, Shanghai, 200240, China
\\ E-mail address: \verb"sunzhang91@sjtu.edu.cn"

\end{document}